\def\Th1{\varTheta}
\begin{document}

\newtheorem{theorem}{Theorem}
\newtheorem{lemma}[theorem]{Lemma}
\newtheorem{claim}[theorem]{Claim}
\newtheorem{cor}[theorem]{Corollary}
\newtheorem{conj}[theorem]{Conjecture}
\newtheorem{prop}[theorem]{Proposition}
\newtheorem{definition}[theorem]{Definition}
\newtheorem{question}[theorem]{Question}
\newtheorem{example}[theorem]{Example}
\newcommand{\hh}{{{\mathrm h}}}
\newtheorem{remark}[theorem]{Remark}

\numberwithin{equation}{section}
\numberwithin{theorem}{section}
\numberwithin{table}{section}
\numberwithin{figure}{section}

\def\sssum{\mathop{\sum\!\sum\!\sum}}
\def\ssum{\mathop{\sum\ldots \sum}}
\def\iint{\mathop{\int\ldots \int}}

\newcommand{\diam}{\operatorname{diam}}

\def\squareforqed{\hbox{\rlap{$\sqcap$}$\sqcup$}}
\def\qed{\ifmmode\squareforqed\else{\unskip\nobreak\hfil
\penalty50\hskip1em \nobreak\hfil\squareforqed
\parfillskip=0pt\finalhyphendemerits=0\endgraf}\fi}

\newfont{\teneufm}{eufm10}
\newfont{\seveneufm}{eufm7}
\newfont{\fiveeufm}{eufm5}
%
%
\newfam\eufmfam
     \textfont\eufmfam=\teneufm
\scriptfont\eufmfam=\seveneufm
     \scriptscriptfont\eufmfam=\fiveeufm
%
%
\def\frak#1{{\fam\eufmfam\relax#1}}

\newcommand{\bflambda}{{\boldsymbol{\lambda}}}
\newcommand{\bfmu}{{\boldsymbol{\mu}}}
\newcommand{\bfxi}{{\boldsymbol{\eta}}}
\newcommand{\bfrho}{{\boldsymbol{\rho}}}

\def\eps{\varepsilon}

\def\fK{\mathfrak K}
\def\fT{\mathfrak{T}}
\def\fL{\mathfrak L}
\def\fR{\mathfrak R}

\def\fA{{\mathfrak A}}
\def\fB{{\mathfrak B}}
\def\fC{{\mathfrak C}}
\def\fM{{\mathfrak M}}
\def\fS{{\mathfrak  S}}
\def\fU{{\mathfrak U}}
\def\fW{{\mathfrak W}}

\def\T {\mathsf {T}}
\def\Tor{\mathsf{T}_d}
\def\Tore{\widetilde{\mathrm{T}}_{d} }

\def\sM {\mathsf {M}}

\def\ss{\mathsf {s}}

\def\Kmnd{\cK_d(m,n)}
\def\Kmnp{\cK_p(m,n)}
\def\Kmnq{\cK_q(m,n)}

\def \balpha{\bm{\alpha}}
\def \bbeta{\bm{\beta}}
\def \bgamma{\bm{\gamma}}
\def \bdelta{\bm{\delta}}
\def \bzeta{\bm{\zeta}}
\def \blambda{\bm{\lambda}}
\def \bchi{\bm{\chi}}
\def \bphi{\bm{\varphi}}
\def \bpsi{\bm{\psi}}
\def \bnu{\bm{\nu}}
\def \bomega{\bm{\omega}}

\def \bell{\bm{\ell}}

\def\eqref#1{(\ref{#1})}

\def\vec#1{\mathbf{#1}}

\newcommand{\abs}[1]{\left| #1 \right|}

\def\Zq{\mathbb{Z}_q}
\def\Zqx{\mathbb{Z}_q^*}
\def\Zd{\mathbb{Z}_d}
\def\Zdx{\mathbb{Z}_d^*}
\def\Zf{\mathbb{Z}_f}
\def\Zfx{\mathbb{Z}_f^*}
\def\Zp{\mathbb{Z}_p}
\def\Zpx{\mathbb{Z}_p^*}
\def\cM{\mathcal M}
\def\cE{\mathcal E}
\def\cH{\mathcal H}

\def\le{\leqslant}

\def\ge{\geqslant}

\def\sfB{\mathsf {B}}
\def\sfC{\mathsf {C}}
\def\L{\mathsf {L}}
\def\FF{\mathsf {F}}

\def\sE {\mathscr{E}}
\def\sS {\mathscr{S}}

\def\cA{{\mathcal A}}
\def\cB{{\mathcal B}}
\def\cC{{\mathcal C}}
\def\cD{{\mathcal D}}
\def\cE{{\mathcal E}}
\def\cF{{\mathcal F}}
\def\cG{{\mathcal G}}
\def\cH{{\mathcal H}}
\def\cI{{\mathcal I}}
\def\cJ{{\mathcal J}}
\def\cK{{\mathcal K}}
\def\cL{{\mathcal L}}
\def\cM{{\mathcal M}}
\def\cN{{\mathcal N}}
\def\cO{{\mathcal O}}
\def\cP{{\mathcal P}}
\def\cQ{{\mathcal Q}}
\def\cR{{\mathcal R}}
\def\cS{{\mathcal S}}
\def\cT{{\mathcal T}}
\def\cU{{\mathcal U}}
\def\cV{{\mathcal V}}
\def\cW{{\mathcal W}}
\def\cX{{\mathcal X}}
\def\cY{{\mathcal Y}}
\def\cZ{{\mathcal Z}}
\newcommand{\rmod}[1]{\: \mbox{mod} \: #1}

\def\cg{{\mathcal g}}

\def\vy{\mathbf y}
\def\vr{\mathbf r}
\def\vx{\mathbf x}
\def\va{\mathbf a}
\def\vb{\mathbf b}
\def\vc{\mathbf c}
\def\ve{\mathbf e}
\def\vh{\mathbf h}
\def\vk{\mathbf k}
\def\vm{\mathbf m}
\def\vz{\mathbf z}
\def\vu{\mathbf u}
\def\vv{\mathbf v}

\def\e{{\mathbf{\,e}}}
\def\ep{{\mathbf{\,e}}_p}
\def\eq{{\mathbf{\,e}}_q}

\def\Tr{{\mathrm{Tr}}}
\def\Nm{{\mathrm{Nm}}}

 \def\SS{{\mathbf{S}}}

\def\lcm{{\mathrm{lcm}}}

 \def\0{{\mathbf{0}}}

\def\({\left(}
\def\){\right)}
\def\l|{\left|}
\def\r|{\right|}
\def\fl#1{\left\lfloor#1\right\rfloor}
\def\rf#1{\left\lceil#1\right\rceil}
\def\fl#1{\left\lfloor#1\right\rfloor}
\def\ni#1{\left\lfloor#1\right\rceil}
\def\sumstar#1{\mathop{\sum\vphantom|^{\!\!*}\,}_{#1}}

\def\mand{\qquad \mbox{and} \qquad}

\def\tblue#1{\begin{color}{blue}{{#1}}\end{color}}




\hyphenation{re-pub-lished}

\mathsurround=1pt

\def\bfdefault{b}

\def \F{{\mathbb F}}
\def \K{{\mathbb K}}
\def \N{{\mathbb N}}
\def \Z{{\mathbb Z}}
\def \P{{\mathbb P}}
\def \Q{{\mathbb Q}}
\def \R{{\mathbb R}}
\def \C{{\mathbb C}}
\def\Fp{\F_p}
\def \fp{\Fp^*}

 \def \xbar{\overline x}

\title[Roth's Theorem in Super Smooth Numbers]
{Roth's Theorem in Super Smooth Numbers}

\author [L. P. Wijaya]{Laurence P. Wijaya}
\address{Department of Mathematics, University of Kentucky, 715 Patterson Office Tower, Lexington, KY 40506, USA}
\email{laurence.wijaya@uky.edu}

\begin{abstract}  We say that the set of $y$-smooth numbers $\mathcal{S}(N,y)$ up to $N$ is super smooth if $y=\log^KN$ for a large fixed constant $K$. We show that the Roth's theorem on arithmetic progressions is true in super smooth numbers case. This extends the result of Harper where he showed the statement is true under a weaker hypothesis.
 \end{abstract}

\keywords{Arithmetic Progression, Roth Theorem, Smooth Numbers}
\subjclass{11B25, 11L07, 11N25}

\maketitle


\section{Introduction}
The study of arithmetic progression inside some subset of the natural numbers was started by van der Waerden in 1927 where he proved that if we color the set of natural numbers by finitely many colors, where each number can only get exactly one color, then for every positive integer $k$, there is a monochromatic arithmetic progression of length $k$. Later on, Erd\H{o}s and Tur\'{a}n \cite{ET} conjectured that any subset of natural numbers with positive upper density contains arithmetic progression of length $k$ for any $k\geq 3$. The case $k=3$ was first solved by Roth in 1953 \cite{Roth} and for the general case $k\geq 4$ was solved by Szemer\'edi in 1975 \cite{Sze}. Now there are several improvements on the quantitative results on these theorems, see for example \cite{KM, LSS}.

In fact, there is more general conjecture from Erd\H os and Tur\'an that states that if $A$ is a subset of the natural numbers $\N$ such that
\[
\sum_{a\in A}\frac1a=\infty,
\]
then there is an arithmetic progression of length $k$ inside of $A$ for any $k\geq 3$. While this conjecture still remains open for $k\geq 4$, there is known special case, such as when $A=\mathbb{P}$, where $\mathbb{P}$ is the set of prime numbers, which is a celebrated result in \cite{GT2}. The Roth's theorem in primes is proved by Green \cite{G} and now follows from Kelley and Meka's result \cite{KM}, that also implies the Erd\H{o}s-Tur\'{a}n conjecture for $k=3$. The main tools in Green's paper are the transference principle and the $W$-trick. These tools allow us to transfer the result from the dense case of the natural numbers to the relatively dense case in the primes.

Another interesting set of numbers is the so-called \textit{smooth numbers}. This subset of the natural numbers is also sparse, and in some cases it is very sparse. A positive integer $n$ is said to be \textit{$y$-smooth }for some positive integer $y$ if every prime factor of $n$ is at most $y$. There are many applications of smooth numbers, see \cite{GrSur} for an excellent survey on this topic. One would expect that for a suitable choice of $y$, the Roth's theorem holds for $y$-smooth numbers. We expect that if $N$ is parameter going to infinity and if $\mathcal{S}(N,y)$ is the number of $y$-smooth numbers up to $N$, then for suitable $y$ depending on $N$ there exists arithmetic progression of length $3$ in $\mathcal{S}(N,y)$. The case where $y=N^{\varepsilon}$ for any $\varepsilon>0$ follows from the original Roth's theorem since in this case we have that the set of $y$-smooth numbers is dense enough in $[1,N]$. One interesting case is to consider when $y$ is $\log^KN$ for some large $K$. It is known $\Psi(N,y):=|\mathcal{S}(N,y)|=N^{1-1/K+o(1)}=N^{\alpha}$ for $y=\log^KN$ as $N\rightarrow \infty$, with $\alpha=1-1/K+o(1)$. We will use this fact repeatedly in this article. For the proof, see again \cite{GrSur}. In the case when $K$ is fixed large number, we call the set $\mathcal{S}(N,y)$ to be the set of \textit{super smooth numbers}.

However, the case for $y$-smooth numbers with $y=\log^KN$ is not covered by the current result of Kelley and Meka. In 2016, Harper \cite{H} established a result where he proved that for any $\delta>0$, there is large constant $K$ depends on $\delta$ such that if $x$ is a parameter going to infinity, depending only on $\delta$, $y$ is such that $\log^Kx\leq y\leq x$, and $A$ is subset of $[1,x]\cap \N$ such that any prime factor of an element of $A$ is at most $y$ with $|A|\geq \delta\Psi(x,y)$, then $A$ contains arithmetic progression of length $3$. Note that in this result the parameter $K$ goes to infinity as $\delta$ goes to zero. He mentioned that one could get the result if $K$ is fixed large constant by using $W$-trick but never explicitly proved the statement. In this article, our main aim is to prove the previous statement.
\begin{theorem}
\label{thm:Roth Super Smooth}
    Let $K$ be a fixed large positive integer. For any $\delta>0$, for any large natural number $N$ in terms of $\delta$ and $y=\log^KN$, and any set $A\subseteq \mathcal{S}(N,y)$ such that $|A|\geq \delta\Psi(N,y)$ where $\mathcal{S}(N,y)$ is the set of $y$-smooth numbers up to $N$ and $\Psi(N,y)=|\mathcal{S}(N,y)|$, there exists arithmetic progression of length $3$ inside $A$.
\end{theorem}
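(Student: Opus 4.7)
The plan is to derive Theorem~\ref{thm:Roth Super Smooth} from Harper's theorem in \cite{H} via a multiplicative $W$-trick. Harper's theorem produces the desired $3$-AP whenever $y\ge\log^{K(\delta)}N$, but with $K(\delta)\to\infty$ as $\delta\to 0$. To allow $K$ to be a single fixed constant, I will first refine $A$ to a subset whose relative density is measured against a cleaner ambient set of smooth numbers coprime to a product of small primes, and for which Harper's minor-arc losses no longer couple to $\delta$.

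First I would fix a threshold $w=w(\delta)$ (thought of as a large constant depending on $\delta$, or more flexibly as $w=\log\log N$) and set $W=\prod_{p\le w}p$. For each $a\in A$ I perform the multiplicative decomposition $a = V(a)\cdot U(a)$, where $V(a)=\prod_{p\le w}p^{v_p(a)}$ is the $w$-smooth part and $U(a)$ is the remaining $y$-smooth, $W$-free factor. This induces compatible partitions
\[
A = \bigsqcup_{d} A_d, \qquad \mathcal{S}(N,y) = \bigsqcup_{d} d\cdot \mathcal{S}^{\ast}(N/d,y),
\]
where $d$ ranges over $w$-smooth integers and
\[
\mathcal{S}^{\ast}(M,y) := \{n\le M : n \text{ is } y\text{-smooth},\ \gcd(n,W)=1\}.
\]
A direct averaging argument using $|A|\ge\delta\,|\mathcal{S}(N,y)|$ across these decompositions then yields some $d$ with $|A_d|\ge\delta\,|\mathcal{S}^{\ast}(N/d,y)|$. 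Setting $A' := A_d/d \subseteq \mathcal{S}^{\ast}(N/d,y)$, a $3$-AP in $A'$ pulls back to a $3$-AP in $A$, so it suffices to locate a $3$-AP in $A'$.

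Finally I would rerun Harper's transference argument, but on $A'$ inside the cleaner ambient set $\mathcal{S}^{\ast}(N/d,y)$. The gain from the $W$-trick is that $\mathcal{S}^{\ast}$ is automatically equidistributed in residue classes modulo any modulus coprime to $W$, exactly as in Green's $W$-trick for the primes \cite{G}; this removes the small-modulus irregularities which in \cite{H} force $K$ to grow with $1/\delta$. The main obstacle is verifying this claim quantitatively: one must track constants through the Fourier-analytic estimates of \cite{H} and check that, once $w=w(\delta)$ is chosen appropriately, every $\delta$-dependent loss in Harper's enveloping majorant --- in particular in its $L^p$ bounds on the minor arcs and in the accompanying restricted divisor-sum estimates --- is absorbed by the saving coming from coprimality with $W$. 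Once this is established, Harper's density-increment/transference step produces the required $3$-AP in $A'$ for any fixed $K$ exceeding an absolute threshold $K_0$, completing the proof.
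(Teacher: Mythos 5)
Your high-level plan (factor out the small-prime part via a $W$-trick, then transfer) is the right one and is indeed what the paper does, but a key step of the $W$-trick is missing and without it the transference will not go through.

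The gap is in the claim that restricting to $\mathcal{S}^{\ast}(M,y)=\{n\le M:\,n\text{ is }y\text{-smooth},\,\gcd(n,W)=1\}$ is ``automatically equidistributed in residue classes modulo any modulus coprime to $W$, exactly as in Green's $W$-trick for the primes.'' In Green's trick one does not merely pass to primes coprime to $W$; one \emph{fixes a residue class} $b\bmod W$ and reparametrizes $p\mapsto (p-b)/W$. That linearization is what kills the large Fourier coefficients at rationals $a/q$ with $q$ dividing (or sharing factors with) $W$: without it, $\mathcal{S}^{\ast}$ avoids all residue classes $r\bmod p$ with $p\mid\gcd(q,W)$ and $p\mid r$, so $\widehat{\mathbbm{1}_{\mathcal{S}^{\ast}}}(a/q)$ behaves like a Ramanujan sum and is \emph{not} $o(1)$; the $\eta$ in Proposition~\ref{prop : transfer} cannot then be taken small. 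Your decomposition $a=V(a)U(a)$ and $A'=A_d/d$ factors out the $w$-smooth part but never pigeonholes on the residue of $U(a)$ modulo $W$, and never performs the linear change of variable. This is precisely why the paper works with pairs $\mathfrak{b}=(b_1,b_2)$ and the set $A_{\mathfrak b}=\{n: b_1(Wn-b_2)\in A\}$: the parameter $b_2\in[W]$ with $\gcd(b_2,W)=1$ fixes the residue class, and the map $m\mapsto n=(m+b_2)/W$ produces a set living in an unrestricted interval $[N_{\mathfrak b}]$, so the dangerous small denominators disappear.

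There is a second, related omission: even after fixing the residue class, $y$-smooth numbers are not equidistributed on $[1,N]$; their local density decays like $n^{\alpha-1}$, so the raw indicator function still has Fourier coefficients near $0$ that do not resemble $\tau(\theta)=\frac1N\sum_{n\le N}e(n\theta)$. The paper's majorant $\nu_{\mathfrak b}$ carries the weight $(Wn-b_2)^{1-\alpha}/\alpha$ (times the Euler product $C_W$) exactly to flatten this profile, which is what makes $\widehat{\nu_{\mathfrak b}}$ close to $\mathbbm{1}_{a=0}$ in Proposition~\ref{lem : avg}. ``Rerunning Harper's argument'' with the unweighted indicator of $\mathcal{S}^{\ast}$ does not supply this. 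Finally, note the paper takes $w=\tfrac12\log_3 N$ (not $w=w(\delta)$ or $\log\log N$); the size of $W$ has to be small enough that $N_{\mathfrak b}=N/(b_1W)$ is still large and that the constants $C_W^3 W^{3-3\alpha}$ appearing in the final count are dominated by $N_{\mathfrak b}^{3\alpha-1}$, so the choice of $w$ is constrained and cannot simply be ``a large constant depending on $\delta$.''
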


We sketch the idea of the proof. We want to show that $\sum_{n,d\leq N, d\neq 0}\mathbbm{1}_A(n)\mathbbm{1}_A(n+d)\mathbbm{1}_A(n+2d)\gg 1$ where $\mathbbm{1}_A$ is the characteristic function of the set $A$, so that there exists $n,d$ such that $n,n+d,n+2d\in A$. To do this, we pass to another subset of $[N]$ that is given by the $W$-trick, and show that the arithmetic progression there corresponds to another arithmetic progression in $A$.

The outline for the rest of paper is as follows. We collect some notation in Section~\ref{sec:not} and some preliminary results in Section~\ref{sec:prel}. We prove the Theorem \ref{thm:Roth Super Smooth} in Section \ref{sec:main}, while the main tools are proven in Section \ref{sec:trick}.

\section{Notations and Conventions}
\label{sec:not}
We use the notations $f\ll g$ or $g\gg f$ or $f=O(g)$ as equivalent to the statement $|f|\leq Cg$ for some constant $C>0$. If the constant depends on a parameter, for example if it depends on $\delta,$ we will use the subscript $f=O_{\delta}(g)$ to emphasize that $C=C(\delta)$ depends only on $\delta.$ We write $f=o(g)$ if $(f/g)(x)$ tends to zero as $x$ goes to infinity. We again write the subscript if the decay depends on some parameters. We write $P(q)$ as the largest prime factor of natural number $q$, and $\omega(q)$ as number of distinct prime factors of $q$. For a positive real number $x,$ we denote $\{1,\dots,\lfloor x\rfloor\}$ by $[ x]$. Write $\log_k(N)$ as the $k$-th iterative logarithm $\log(\log(\cdots(\log N)))$.

We write $\mathcal{S}(N,y)$ to denote the set of positive $y$-smooth numbers up to $N$, and we denote its cardinality by $\Psi(N,y)$ as mentioned previously. The set of all $y$-smooth numbers is denoted by $\mathcal{S}(y)$. We also write $e(x)$ for $e^{2\pi ix}$. The notation $\Psi(N,y;q,a)$ stands for the number of elements in $\mathcal{S}(N,y)$ such that they are congruent to $a\mod q$. Another notation we use is $\Psi_q(N,y)$ that denotes the number of elements of $\mathcal{S}(N,y)$ which are coprime with $q$. One can see that
\[
\Psi_q(N,y)=\sum_{(a,q)=1}\Psi(N,y;q,a).
\]

In this article, if we say arithmetic progression, we always mean that it is a nontrivial arithmetic progression, i.e., the difference is non zero. We also write $\mathbbm{1}_A$ for the characteristic function of some specified set $A$.

For $f : \Z/N\Z\rightarrow \R$, we denote
\[
\|f\|_1:=\sum_{n\in\Z/N\Z}|f(n)|
\]
and
\[
\|f\|_{\infty}=\sup_{n\in\Z/N\Z}|f(n)|.
\]

\section{Preliminaries}
\label{sec:prel}
\subsection{Smooth Numbers Results}
We recall several results about the cardinality of smooth numbers that we need. We begin by discussing the distribution of smooth numbers in arithmetic progressions. One can see for example \cite{BaP, FT} for the early results on this topic. We record the following lemma which will be useful for our purpose, which is \cite[Theorem 1]{Gr}. Let $\Psi(x,y;q,a)$ denote the number of $y$-smooth numbers up to $x$ that are congruent to $a\mod q$ and $\Psi_q(x,y)$ denotes the number of $y$-smooth numbers $n$ up to $x$ such that $\gcd(n,q)=1$.
\begin{lemma}
\label{lem:Granville}
    Let $\beta$ be a fixed positive quantity. Then we have
    \begin{align*}
        \Psi(x,y;q,a)=\frac{1}{\varphi(q)}\Psi_q(x,y)\left( 1+O\left( \frac{\log q}{\log y} \right) \right)
    \end{align*}
    holds uniformly in the range
    \[
    2\leq y\leq x,\quad 1\leq q\leq \min\{x,y^{\beta}\},\quad \gcd(a,q)=1.
    \]
\end{lemma}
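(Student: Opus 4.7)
The strategy is to detect the congruence condition by Dirichlet characters modulo $q$ and reduce the lemma to estimating the twisted sums
$$\Psi(x,y;\chi) := \sum_{\substack{n \leq x \\ n \in \mathcal{S}(y)}} \chi(n)$$
for non-principal $\chi$. Since $\gcd(a,q)=1$, orthogonality of characters gives
$$\Psi(x,y;q,a) = \frac{1}{\varphi(q)} \sum_{\chi \bmod q} \overline{\chi}(a)\, \Psi(x,y;\chi),$$
and the principal character $\chi_0$ contributes exactly $\Psi_q(x,y)/\varphi(q)$, which is the main term. The lemma is therefore equivalent to proving, for every non-principal $\chi$ modulo $q$,
$$|\Psi(x,y;\chi)| \ll \frac{\log q}{\log y}\, \Psi_q(x,y),$$
uniformly in the allowed range.

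To bound $\Psi(x,y;\chi)$ with $\chi \neq \chi_0$, I would exploit the multiplicative structure of smooth numbers through a Hildebrand/Buchstab-type functional identity. Starting from $\log n = \sum_{p^k \mid n} \log p$ and using complete multiplicativity of $\chi$, one derives
$$\Psi(x,y;\chi)\log x = \sum_{p \leq y}\sum_{k \geq 1} \chi(p^k)\log p \cdot \Psi(x/p^k, y; \chi) + \int_1^x \Psi(t,y;\chi)\,\frac{dt}{t}.$$
This recursion relates the smooth character sum to the character sum over primes $\sum_{p \leq y} \chi(p)\log p$ together with smaller instances of $\Psi(\cdot,y;\chi)$. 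Iterating it $O(\log x/\log y)$ times, and plugging in an estimate of the shape
$$\left|\sum_{p \leq y} \chi(p) \log p\right| \ll \frac{y \log q}{\log y},$$
produces the required bound; the factor $\log q$ enters once through the prime character sum, while the factor $1/\log y$ arises from the depth of the Buchstab iteration.

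The main obstacle is securing the above character sum estimate uniformly throughout $1 \leq q \leq \min\{x, y^\beta\}$. For $q \leq (\log y)^A$ it follows from Siegel--Walfisz, but once $q$ is a genuine power of $y$ one must use a Burgess-type bound for incomplete character sums, or an analytic argument via Perron's formula combined with the standard zero-free region of $L(s,\chi)$. The (relatively weak) fact that we only seek a $\log q$ factor rather than a power saving is what allows one to push $q$ as high as $y^\beta$. Finally, to match the main term, one must carefully separate the contribution of primes dividing $q$ (which distinguishes $\Psi_q$ from $\Psi$) and verify that the repeated application of the Buchstab recursion does not compound errors; this bookkeeping, especially when $\omega(q)$ is large, is the delicate part of the argument.
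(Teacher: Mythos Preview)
The paper does not prove this lemma; it is quoted as \cite[Theorem~1]{Gr} (Granville, \textit{Acta Math.}\ 1993). So the comparison is with Granville's original argument.

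Your reduction via orthogonality of characters to bounding $\Psi(x,y;\chi)$ for non-principal $\chi$ is correct, and a Buchstab-type recursion is a sensible way to attack that. The genuine gap is the input you assume: an estimate of the shape
\[
\Bigl|\sum_{p\le y}\chi(p)\log p\Bigr|\ll \frac{y\log q}{\log y}
\]
uniformly for all non-principal $\chi\pmod q$ with $q$ up to a fixed power of $y$. This is not available. Siegel--Walfisz handles only $q\le(\log y)^A$; the classical zero-free region of $L(s,\chi)$ gives nothing useful once $\log q$ is comparable to $\sqrt{\log y}$, let alone to $\log y$; and Burgess bounds concern character sums over consecutive integers, not over primes, so they cannot be fed into the prime-power side of your recursion. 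In the interesting range $q<y$ (where $\log q/\log y$ is small and the lemma has content), your bound would require nontrivial cancellation in $\sum_{p\le y}\chi(p)$ for moduli as large as $y^{1-\varepsilon}$, which is well beyond what is known unconditionally and would already have striking consequences for the least prime non-residue.

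This obstruction is exactly why Granville's proof is \emph{not} analytic. His argument is elementary and combinatorial: any $y$-smooth integer $n>z$ has a divisor in the interval $(z/y,z]$ (build it up one prime factor at a time). Taking $z$ of size about $q$, one factors each $y$-smooth $n$ coprime to $q$ as $n=de$ with $d$ in a short multiplicative window, and then compares the counts in different residue classes $a\bmod q$ directly, with no characters or $L$-functions involved. The $\log q/\log y$ error arises from this decomposition, not from any oscillatory bound on primes. If you insist on the analytic route you will recover something closer to Fouvry--Tenenbaum \cite{FT}, whose admissible range of $q$ is correspondingly narrower.
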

A stronger result can be found in \cite{BaP} with smaller range.

There are other extensive studies on the relation between $\Psi(x,y;q,a)$ and $\Psi_q(x,y)$. In fact, Soundararajan \cite{Sound} conjectured that for any positive real number $A$, if $y$ and $q$ are large numbers with $q\leq y^A$, then we have as $\log x/\log q\rightarrow \infty$,
\[
\Psi(x,y;q,a)\sim \frac{1}{\varphi(q)} \Psi_q(x,y)
\]
for any $(a,q)=1$.
See \cite{BS, Harp} for some progress on this conjecture. Now there are several results on the asymptotic value of $\Psi_q(x,y)$ given in \cite{BT1} and \cite{BT2}. We see that by combining \cite[Theorem 2.1]{BT1} and \cite[Corollary 2.2]{BT1} we have the following statement.
\begin{lemma}
    \label{lem : Breteche Tenenbaum result}
    Let $g_q(s):=\displaystyle \prod_{p\mid q}(1-p^{-s})$ where $q\in \N$ and $s\in \C$. Let $\alpha:=\alpha(x,y)$ be the saddle point of $y$-smooth numbers up to $x$. If $q$ is $y$-smooth with $P(q)\leq y$ and $\omega(q)\ll\sqrt{y}$, we have
    \[
    \Psi_q(x,y)\asymp g_q(\alpha)\Psi(x,y).
    \]
    In particular if $\omega(q)\ll \sqrt{y}/\log y$, we have
    \[
    \Psi_q(x,y)= g_q(\alpha)\Psi(x,y)(1+o(1)).
    \]
\end{lemma}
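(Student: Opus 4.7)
The statement is extracted directly from the saddle-point framework for restricted smooth numbers developed in \cite{BT1}, so my plan is to verify that the hypotheses of their Theorem~2.1 and Corollary~2.2 are met in our setting, and then read off the two conclusions.

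First I would record that for a $y$-smooth integer $q$ the Dirichlet series of $y$-smooth numbers coprime to $q$ factors as
\[
\sum_{\substack{n\in\mathcal{S}(y)\\(n,q)=1}} n^{-s} \;=\; g_q(s)\,\zeta(s;y), \qquad \zeta(s;y) := \prod_{p\le y}\bigl(1-p^{-s}\bigr)^{-1},
\]
because every Euler factor of $\zeta(s;y)$ with $p\mid q$ is cancelled by the matching factor of $g_q(s)$. This factorization is the input that de la Bret\`eche and Tenenbaum need in order to rerun the Hildebrand--Tenenbaum saddle-point analysis for $\Psi_q(x,y)$ with an extra factor of $g_q(s)$ in the integrand, the saddle being the usual $\alpha=\alpha(x,y)$ characterized by $\sum_{p\le y}(\log p)/(p^\alpha-1)=\log x$.

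With this in hand, Theorem~2.1 of \cite{BT1} evaluates the resulting Mellin--Perron integral by shifting the contour to $\Re s=\alpha$ and picking up a main term proportional to $g_q(\alpha)\Psi(x,y)$ together with a quantitative remainder whose size is controlled by the oscillation of the ratio $|g_q(\alpha+i\tau)|/g_q(\alpha)$ on the effective saddle window (of length $\asymp 1/\sqrt{y}$). Their Corollary~2.2 then bounds this oscillation explicitly: it remains bounded when $\omega(q)\ll\sqrt{y}$, and it is $1+o(1)$ when $\omega(q)\ll\sqrt{y}/\log y$. Since our hypotheses $P(q)\le y$ and $\omega(q)\ll\sqrt{y}$ (respectively $\omega(q)\ll\sqrt{y}/\log y$) match exactly the ranges of these two results, the two displayed conclusions of the lemma follow immediately from the combined statement.

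The main obstacle, and the real substance of \cite{BT1}, lies in the oscillation estimate for $g_q$: each factor
\[
\frac{|1-p^{-\alpha-i\tau}|}{1-p^{-\alpha}}
\]
can be as large as $(1+p^{-\alpha})/(1-p^{-\alpha})$, so controlling the product over $p\mid q$ uniformly across the saddle window forces a balance between $\omega(q)$ and the window length, which is where the thresholds $\sqrt{y}$ and $\sqrt{y}/\log y$ come from. In the present note we treat this analysis as a black box and only need to verify that the $q$'s we encounter later satisfy the relevant size constraint on $\omega(q)$, so I expect no obstacle beyond carefully citing \cite{BT1} and checking that the smoothness and prime-divisor hypotheses are in force.
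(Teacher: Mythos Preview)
Your proposal is correct and matches the paper's approach exactly: the paper does not give a proof at all, but simply records the lemma as the combination of \cite[Theorem~2.1]{BT1} and \cite[Corollary~2.2]{BT1}, which is precisely what you do (with rather more explanatory detail than the paper itself provides).
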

\subsection{Exponential Sums Estimates}
Besides the distribution of smooth numbers in arithmetic progressions, we also need an estimate for exponential sums over smooth numbers. This is needed for us for the transference principle. We record the results from \cite{H}. There are also results on exponential sums over smooth numbers over short intervals in \cite{MW}.

Let $R:=\log^{20}N$ and
\[
\mathfrak{M}:=\bigcup_{q\leq R}\bigcup_{(a,q)=1}[a/q-R/(qN),a/q+R/(qN)].
\]
This set is called the \textit{major arc}. The radius of each interval of this major arc set is $R/(qN)$. Clearly this is a subset of $[0,1]$. Its complement in $[0,1]$ is called the \textit{minor arc}. Later in Section \ref{sec:main}, we will work on $\Z/N\Z$ instead of $[N]$. We identify these two when working on exponential sums estimates. We also use the convention that if $a/N$ is in major or minor arc for $a\in\Z/N\Z$ means that $a/N$ is in major or minor arc when we view it as element of $[0,1]$. We first mention \cite[Proposition 5]{H} that gives an estimate of the exponential sum of smooth numbers in the minor arc.
\begin{lemma}
\label{lem:minor}
    For any large $\log^KN\leq y\leq N^{1/100}$ and any $\theta\in [0,1]\backslash\mathfrak{M}$, we have
    \[
    \sum_{\substack{n\leq N\\ n\in\mathcal{S}(y)}} e(n\theta)\ll \Psi(N,y)\frac{1}{\log^5N}.
    \]
\end{lemma}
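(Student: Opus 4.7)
The plan is to reduce the exponential sum over smooth numbers to a bilinear (Type-II) form using the multiplicative structure of $\mathcal S(y)$, and then extract cancellation on the minor arcs via Dirichlet approximation. Every $y$-smooth $n>1$ factors as $n=pm$ with $p\le y$ prime and $m\in \mathcal S(y)$, and the weighted identity $\log n\cdot\mathbbm 1_{n\in\mathcal S(y)}=\sum_{p^k\| n,\,p\le y}k(\log p)\mathbbm 1_{n\in\mathcal S(y)}$ allows me to rewrite
\[
\sum_{n\le N,\,n\in\mathcal S(y)}e(n\theta)\log n=\sum_{p\le y}(\log p)\sum_{\substack{m\le N/p\\ m\in\mathcal S(y)}}e(pm\theta)+(\text{prime power tail}),
\]
where the $k\ge 2$ tail is easily controlled by $\Psi(N/p^k,y)\ll \Psi(N,y)p^{-\alpha k}$. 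Partial summation then removes the $\log n$ weight at the cost of a single factor $\log N$, leaving a bilinear sum over $p$ and $m$ to handle.

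Splitting the $p$-sum dyadically $P<p\le 2P$ and applying Cauchy--Schwarz in the $m$-variable reduces matters to estimating $\sum_{m\le N/P}\bigl|\sum_{P<p\le 2P}e(pm\theta)\bigr|^2$. Expanding the square, the inner geometric series in $m$ is bounded by $\min(N/P,1/\|(p_1-p_2)\theta\|)$. By Dirichlet's theorem, write $\theta=a/q+\beta$ with $(a,q)=1$ and $|\beta|\le 1/(qQ)$ for $Q=N/R$; the minor-arc hypothesis $\theta\in[0,1]\setminus\mathfrak M$ forces $q>R=\log^{20}N$. Grouping the differences $h=p_1-p_2$ by residue classes mod $q$ and invoking the standard Vinogradov-type estimate
\[
\sum_{|h|\le H}\min\!\Bigl(\frac{N}{P},\frac{1}{\|h\theta\|}\Bigr)\ll \frac{HN}{qP}+H+q\log q
\]
produces a saving of a small power of $q^{-1}$ per dyadic range. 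Summing over the $O(\log N)$ dyadic ranges and using $q>\log^{20}N$ gives far more than the required factor $\log^{-5}N$.

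The main obstacle is the bookkeeping at the edges of the dyadic decomposition. For $P$ very small there are too few primes to support Cauchy--Schwarz in the $m$-variable, so I would instead apply Cauchy--Schwarz in the $p$-variable and treat that range as a Type-I sum, exploiting that the smoothness indicator $\mathbbm 1_{\mathcal S(y)}(m)$ on $[1,N/P]$ is structured enough to sum by parts against a geometric progression. For $P$ close to $y$ I must carefully track the density of $\mathcal S(y)$ on the dilated interval $[1,N/P]$ so that the $L^2$-loss from Cauchy--Schwarz is recoverable; the asymptotic $\Psi(x,y)\asymp x^{\alpha}$ available for $y=\log^K N$ serves exactly this purpose. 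The upper constraint $y\le N^{1/100}$ keeps $P/N$ small enough that the geometric tail terms $H+q\log q$ are absorbed, while $y\ge \log^K N$ provides the density of primes required for the averaging. Throughout, the only arithmetic mechanism that actually produces cancellation is the minor-arc condition $q>\log^{20}N$; everything else is careful classical bookkeeping.
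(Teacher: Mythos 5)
The paper does not prove this lemma; it is quoted verbatim as \cite[Proposition 5]{H}, so there is no ``paper's proof'' to compare against. What you have written is a from-scratch attempt, and it has a genuine gap that I do not think can be patched along the route you sketch.

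The fundamental obstruction is in the Cauchy--Schwarz step. After dyadic splitting you bound
$$|S_P|^2 \;\le\; \Bigl(\sum_{\substack{m\le N/P\\ m\in\mathcal S(y)}}1\Bigr)\;\cdot\;\sum_{\substack{m\le N/P\\ m\in\mathcal S(y)}}\Bigl|\sum_{P<p\le 2P}\log p\,e(pm\theta)\Bigr|^2 .$$
When you open the square, the off-diagonal terms involve $\sum_{m\le N/P,\ m\in\mathcal S(y)}e(hm\theta)$, an exponential sum over smooth numbers at a minor-arc frequency --- exactly the object you are trying to bound, so the argument is circular. If instead you drop the smoothness restriction on $m$ in the inner sum so as to get a genuine geometric series (which is what your invocation of $\min(N/P,\|h\theta\|^{-1})$ and the Vinogradov-type lemma tacitly does), then the \emph{diagonal} $h=0$ already contributes $\asymp P\log P\cdot N/P = N\log P$, and
$$|S_P|^2 \;\ll\; \Psi(N/P,y)\cdot N\log P \;\asymp\; N^{1+\alpha}P^{-\alpha}\log P.$$
This is to be compared with the target $|S_P|\ll \Psi(N,y)(\log N)^{-c}\asymp N^{\alpha}(\log N)^{-c}$; squaring, one needs $N^{1-\alpha}\ll P^{\alpha}(\log N)^{-O(1)}$. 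But $P\le y=\log^K N$ while $N^{1-\alpha}=N^{1/K+o(1)}$, which is super-polylogarithmic, so the inequality fails by a huge margin for \emph{every} $P$ in range, not just small $P$. The density deficit $\alpha<1$ makes the $L^2$ loss from Cauchy--Schwarz irrecoverable by any amount of minor-arc cancellation, because the latter can only win polylogarithmic factors while the loss is $N^{(1-\alpha)/2}$.

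Your proposed fallback for small $P$ --- ``apply Cauchy--Schwarz in the $p$-variable and treat the range as Type I, summing $\mathbbm 1_{\mathcal S(y)}(m)$ by parts against a geometric progression'' --- runs into the same problem: the inner sum $\sum_{m\le N/p,\ m\in\mathcal S(y)}e(pm\theta)$ is again a smooth exponential sum at a frequency $p\theta$ whose denominator $q/\gcd(p,q)$ is still $\gg q/P\gg \log^{20-K}N$-large, so it cannot be summed as a geometric series (the smoothness indicator destroys the geometric structure) and you are back to square one. Harper's actual proof avoids this collision by exploiting the multiplicative structure of smooth numbers more delicately: rather than a log-weighted Vaughan-type identity, he uses the fact that every large smooth $n$ admits a $y$-smooth divisor in any prescribed dyadic window, together with an averaging over the window and a mean-value/restriction estimate to control the overcount, so that he never needs to unrestrict $m$ to produce a geometric series. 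That structural input is precisely what your sketch is missing, and without it the $\log^{-5}N$ savings is not reachable by the bilinear argument you outline.
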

To get an estimate for the major arc it turns out to be more difficult compare to the minor arc result. We follow closely the method in \cite[Section 4]{G}. The function we will use is function $f : \Z/N\Z\rightarrow \R$ with bounds
\begin{align}
\begin{split}
\label{eq:bound}
    \|f\|_{\infty}&=O(\log_2(N)/N^{\alpha}),\\
    \|f\|_1&=1+o(1).
\end{split}
\end{align}
We also have that if $L\geq N(\log N)^{-A-41}$ for some positive constant $A>0$ and if $X\subseteq [N]$ is an arithmetic progression $\{r,r+q,\dots,r+(L-1)q\}$ with $q\leq (\log N)^{20}$, we have
\[
\sum_{n\in X}f(n)=\frac{L}{N}\gamma_{r,q}(f)(1+o(1))
\]
where $\gamma_{r,q}(f)$ is a constant that depends only on $r$ and $q$ with $|\gamma_{r,q}|\leq q$. Here we identify $[N]$ with $\Z/N\Z$.

We recall several results from \cite[Section 4]{G} that will be useful to estimate the exponential sum over the major arc that we defined. For a residue class $r\mod q$, we denote $N_r$ for the set $\{n\leq N : n\equiv r\mod q\}$. For $\theta\in [0,1)$, we have
\[
\tau(\theta):=\frac{1}{N}\sum_{n\leq N} e(\theta n).
\]
\begin{lemma}
    Let $r$ be a residue class modulo $q$ and suppose $|\theta|\leq (\log N)^{20}/qN$. Suppose $f$ satisfies the estimates in \eqref{eq:bound}. Then we have
    \[
    \sum_{n\in N_r}f(n)e(n\theta)=q^{-1}\gamma_{r,q}(f)\tau(\theta)+o\left(q^{-1}\right).
    \]
\end{lemma}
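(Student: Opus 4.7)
My plan is to partition $N_r \cap [1,N]$ into consecutive arithmetic progressions $X_1, \dots, X_M$ of common difference $q$, apply the hypothesis on sums of $f$ over such progressions to each block, and treat $e(n\theta)$ as essentially constant on each block. Concretely I would take the block length $L = \lfloor N(\log N)^{-A-41}\rfloor$, the smallest length allowed by the AP hypothesis; using $|\theta| \leq (\log N)^{20}/(qN)$ this yields $Lq|\theta| \leq (\log N)^{-A-21} = o(q^{-1})$, so $e(n\theta)$ varies by $o(q^{-1})$ across each block.

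Writing the $i$-th block as $X_i = \{n_0^{(i)}, n_0^{(i)}+q, \dots, n_0^{(i)}+(L-1)q\}$ with $n_0^{(i)} = r + (i-1)Lq$, and handling the (possibly short) final block by applying the hypothesis to the leftover AP directly, I replace $e(n\theta)$ on $X_i$ by $e(n_0^{(i)}\theta)$. This introduces a total error of at most $2\pi Lq|\theta|\sum_n |f(n)| = O(Lq|\theta|\,\|f\|_1) = o(q^{-1})$ thanks to the $L^1$ bound in \eqref{eq:bound}. The AP hypothesis then gives $\sum_{n\in X_i} f(n) = (L/N)\gamma_{r,q}(f)(1+o(1))$, so the main contribution becomes
\[
\frac{L\gamma_{r,q}(f)}{N}\,(1+o(1))\sum_{i=1}^{M} e(n_0^{(i)}\theta).
\]
Since the $n_0^{(i)}$ form an arithmetic progression with common difference $Lq$ and $|Lq\theta|, r|\theta| = o(1)$, the geometric sum is
\[
\sum_{i=1}^{M} e(n_0^{(i)}\theta) = e(r\theta)\,\frac{e(MLq\theta)-1}{e(Lq\theta)-1},
\]
which, after Taylor expanding the denominator and using $MLq = N(1+o(1))$, compares directly with the closed form of $N\tau(\theta) = \sum_{n\leq N} e(n\theta)$ and equals $(N/(Lq))\tau(\theta)(1+o(1))$. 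Multiplying out gives $q^{-1}\gamma_{r,q}(f)\tau(\theta) + o(q^{-1})$, matching the claim.

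The main obstacle will be ensuring that the various $o(1)$ contributions — from the block-approximation of $e(n\theta)$, from the partial last block, from $e(r\theta) = 1 + o(1)$, and most delicately from the implicit $(1+o(1))$ factor in the AP hypothesis — all aggregate to $o(q^{-1})$ uniformly in $q \leq (\log N)^{20}$, rather than only $o(1)$. This forces the $o(1)$ in the AP hypothesis to decay uniformly in $r,q$ at a quantitative rate better than $(\log N)^{-20}$, which in turn constrains the window for $L$ and is the technical crux of the argument.
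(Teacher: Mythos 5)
Your block decomposition of $N_r$ into arithmetic progressions of length $L=\lfloor N(\log N)^{-A-41}\rfloor$, the approximation of $e(n\theta)$ by its value at the left endpoint of each block, the application of the AP hypothesis block by block, and the re-summation of the resulting geometric series is precisely the argument behind this lemma. The paper does not supply a proof at all --- it is recalled verbatim from Section~4 of Green's \emph{Roth's theorem in the primes} --- and Green's own proof runs along exactly these lines, so your route is the intended one.

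Two points merit comment. First, ``applying the hypothesis to the leftover AP directly'' does not work as written: if the final partial block has length $L'<L\geq N(\log N)^{-A-41}$, the AP hypothesis cannot be invoked on it, and a crude $\|f\|_\infty$-times-length bound is far too large (of order $N^{1-\alpha}(\log N)^{-A-41}\log_2 N$). The standard fix is to let the last block absorb the remainder, giving a block of length in $[L,2L)$, which still satisfies the length constraint and only doubles the per-block $e(n\theta)$-approximation error, or equivalently to apply the hypothesis to the merged range and subtract. Second, the subtlety you flag at the end is real and is the heart of the matter: since the target error is $o(q^{-1})$ with $q$ allowed to be as large as $(\log N)^{20}$, the $o(1)$ in the AP hypothesis must in fact decay like $o((\log N)^{-20})$ uniformly in $r$ and $q$. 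In Green's quantitative formulation the AP estimate carries a power-of-$\log$ saving with a freely adjustable exponent $A$, which is why the statement holds; but with the $o(1)$ left unquantified, as in the paper's paraphrase, the proof is incomplete until one either invokes Green's quantitative bound or extracts the required rate from the construction of $f$. Apart from the final-block slip, your proposal is correct and is the same argument as the source.
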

\begin{lemma}
\label{lem:major}
    Suppose $f$ satisfies \eqref{eq:bound} and $\theta\in\mathfrak{M}_{a,q}$ for some $a,q$ with $\gcd(a,q)=1$ and $q\leq (\log N)^{20}$. Write
    \[
    \sigma_{a,q}(f)=\sum_{r\mod q} e(ar/q)\gamma_{r,q}(f).
    \]
    Then we have
    \[
    \widehat{f}(\theta)=q^{-1}\sigma_{a,q}(f)\tau(\theta-a/q)+o(1)
    \]
    with
    \[
    \widehat{f}(\theta)=\sum_{n\leq N}f(n)e(n\theta).
    \]
\end{lemma}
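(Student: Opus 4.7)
The plan is to reduce Lemma~\ref{lem:major} to the preceding lemma (the one handling a single residue class) by decomposing $[1,N]$ into residue classes modulo $q$ and then reassembling the pieces. Writing $\theta = a/q + \beta$ with $\beta := \theta - a/q$, the hypothesis $\theta \in \mathfrak{M}_{a,q}$ places $\beta$ in the small-twist regime covered by the preceding lemma; concretely, $|\beta| \leq R/N = (\log N)^{20}/N$, and together with $q \leq (\log N)^{20}$ this is precisely the range there.

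The first step is the residue-class decomposition
\[
\widehat{f}(\theta) \;=\; \sum_{r \bmod q}\, \sum_{n \in N_r} f(n)\, e(n\theta).
\]
For fixed $r$, every $n \in N_r$ satisfies $n \equiv r \pmod q$, so $na/q - ra/q \in \Z$, and consequently $e(n\theta) = e(ar/q)\, e(n\beta)$. Pulling the unimodular factor $e(ar/q)$ out of the inner sum leaves $\sum_{n \in N_r} f(n)\,e(n\beta)$, which is exactly the object estimated by the preceding lemma (with $\beta$ in the role of $\theta$), giving the value $q^{-1}\gamma_{r,q}(f)\tau(\beta) + o(q^{-1})$.

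The final step is to sum over the $q$ residue classes modulo $q$. By the definition of $\sigma_{a,q}(f)$, the main terms combine into
\[
q^{-1}\tau(\beta) \sum_{r \bmod q} e(ar/q)\,\gamma_{r,q}(f) \;=\; q^{-1}\sigma_{a,q}(f)\,\tau(\theta - a/q),
\]
while the $q$ error terms, each of size $o(q^{-1})$ and each weighted by a unimodular factor $e(ar/q)$, aggregate to a total error of $o(1)$.

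The argument is essentially bookkeeping once the preceding lemma is in hand; the only point requiring real care is confirming that the range of $\beta$ permitted by the definition of $\mathfrak{M}_{a,q}$ is admissible as the input $\theta$ of the preceding lemma, which is the step where the precise choice of $R = \log^{20} N$ and the bound $q \leq (\log N)^{20}$ are being used in tandem. No further analytic input beyond the preceding lemma is needed.
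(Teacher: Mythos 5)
The paper does not actually give a proof of Lemma~\ref{lem:major}; it is imported verbatim from \cite[Section~4]{G} alongside the preceding single-residue-class lemma. Your derivation of the second from the first is the standard one (it is exactly Green's derivation), and the bookkeeping is sound: decompose $\widehat{f}(\theta)$ over residue classes modulo $q$, on each class $N_r$ factor $e(n\theta)=e(ar/q)\,e(n\beta)$ with $\beta=\theta-a/q$, invoke the preceding lemma on $\sum_{n\in N_r} f(n)e(n\beta)$, and recombine; the $q$ error terms, each $o(q^{-1})$ uniformly in $r$ and in $q\le(\log N)^{20}$, add up to $o(1)$.

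The one genuine gap is exactly at the spot you yourself flag as ``requiring real care,'' and your justification there is incorrect. The preceding lemma demands $|\theta|\le(\log N)^{20}/(qN)$, but the paper's definition
\[
\mathfrak{M}=\bigcup_{q\le R}\bigcup_{(a,q)=1}\bigl[a/q-R/N,\,a/q+R/N\bigr],\qquad R=\log^{20}N,
\]
only forces $|\beta|\le R/N$, which exceeds the admissible twist $R/(qN)$ by a factor of $q$ whenever $q>1$. Your assertion that $|\beta|\le R/N$ ``together with $q\le(\log N)^{20}$ is precisely the range there'' does not hold: the bound $q\le R$ cuts the wrong way and cannot shrink $|\beta|$. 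As things stand you cannot invoke the single-residue-class lemma on most of $\mathfrak{M}_{a,q}$. This mismatch is arguably inherited from the paper, whose major-arc definition differs from Green's original (Green takes $|\theta-a/q|\le(\log N)^B/(qN)$, with the extra $q$, in which case your reduction goes through verbatim). To repair the argument you should either explicitly adopt Green's narrower arcs, or re-derive the single-residue-class estimate on the wider range $|\theta|\le R/N$ (feasible, but it means redoing the partial summation so that the error stays $o(q^{-1})$ when the twist is larger by a factor of $q$); it cannot simply be waved away by citing $q\le R$.
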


\subsection{Transference Principle}

The main ingredient to get our result is what we call \textit{transference principle}. We state the transference principle we use here, which is \cite[Proposition 5.2]{GT}. For other variations of the Fourier analytic transference principle, one can consult to \cite{Pren}.
\begin{prop}
\label{prop : transfer}
    Let $N$ be a large prime and let $0<\delta\leq 1$. Let $f : \Z/N\Z\rightarrow [0,\infty)$ and $\nu : \Z/N\Z\rightarrow [0,\infty)$ be any functions such that
    \begin{align*}
        f(n)\leq \nu(n),\quad n\in \Z/N\Z
    \end{align*}
    and
    \begin{align*}
        \frac{1}{N}\sum_{n\in \Z/N\Z} f(n)&\geq \delta.
    \end{align*}
    In addition, let $\eta\geq 0, M>0$, and $2<p<3$ be any parameters such that
    \begin{align*}
        \left| \frac{1}{N}\sum_{n\in \Z/NZ}\nu(n)e(an/N)-\mathbbm{1}_{a=0} \right|&\leq \eta,\quad a\in \Z/N\Z\\
        \sum_{a\in \Z/N\Z}\left| \frac{1}{N}\sum_{n\in \Z/N\Z} f(n)e(an/N) \right|^p&\leq M
    \end{align*}
    where $\mathbbm{1}_{a=0}$ is the indicator function when $a=0$.

    Then we have
    \[
    \frac{1}{N^2}\sum_{n,d\in \Z/N\Z}f(n)f(n+d)f(n+d)\geq c(\delta)-O_{\delta,p,M}(\eta)
    \]
    where $c(\delta)>0$ depends only on $\delta$. 
\end{prop}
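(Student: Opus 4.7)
The plan is to follow the Fourier-analytic transference principle of Green (further elaborated by Green and Tao). The proof will proceed in three stages: construct a \emph{dense model} $g$ for $f$; apply Roth's theorem in its bounded-function form to $g$; and bound the discrepancy between the $3$-AP counts for $f$ and $g$ using the restriction hypothesis.

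For the dense-model step I would produce a function $g : \Z/N\Z \to [0,\infty)$ with three properties: (i) $\frac{1}{N}\sum_n g(n) = \frac{1}{N}\sum_n f(n) \geq \delta$; (ii) $\|g\|_\infty$ is essentially bounded, up to an $O_{\delta,p,M}(\eta)$ loss; and (iii) $\|\widehat{f-g}\|_\infty$ is small. The construction uses the \emph{large spectrum} $\Lambda := \{a \in \Z/N\Z : |\hat f(a)| \geq \epsilon N\}$ of $f$, for a parameter $\epsilon$ that will be tuned in terms of $\delta, p, M, \eta$. The restriction hypothesis $\sum_a |\hat f(a)/N|^p \leq M$ immediately gives $|\Lambda| \leq M \epsilon^{-p}$. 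I would then select an $\epsilon$-regular Bohr set $B = B(\Lambda, \rho)$, define $\phi := \mathbbm{1}_B / |B|$, and set $g := f * \phi * \phi$. The Fourier identity $\hat g = \hat f \cdot \hat\phi^2$ gives $\hat g(0) = \hat f(0)$, establishing (i), while (iii) follows from $|\hat\phi(a) - 1| = O(\epsilon)$ on $\Lambda$ (by Bohr regularity) together with $|\hat f(a)| < \epsilon N$ off $\Lambda$, yielding $\|\widehat{f-g}\|_\infty = O(\epsilon N)$.

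The main obstacle is (ii), and this is precisely where the pseudorandomness hypothesis on $\nu$ is essential. Since $f \leq \nu$ pointwise, it suffices to bound $\nu * \phi * \phi$ uniformly. Expanding in Fourier,
\[
(\nu * \phi * \phi)(n) = \frac{1}{N}\sum_a \hat\nu(a)\,\hat\phi(a)^2\,e(-an/N),
\]
the $a=0$ term equals $\hat\nu(0)/N = 1 + O(\eta)$, while the $a \neq 0$ contribution is at most $\eta\,N\,\|\phi\|_2^2 = \eta N/|B|$ by Parseval. A standard regular Bohr set satisfies $|B|/N \geq (\rho/C)^{|\Lambda|}$; since $|\Lambda|, \rho, \epsilon$ are all controlled by $\delta,p,M$ after tuning, this yields the required $L^\infty$ bound $\|g\|_\infty \leq 1 + O_{\delta,p,M}(\eta)$.

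With $g$ in hand, Roth's theorem on $\Z/N\Z$ applied to the bounded function $g$ (of mean $\geq \delta$) will give $T(g,g,g) := \frac{1}{N^2}\sum_{n,d} g(n)g(n+d)g(n+2d) \geq c(\delta)$. To transfer back to $f$, I would write
\[
T(f,f,f) - T(g,g,g) = T(f-g,f,f) + T(g,f-g,f) + T(g,g,f-g),
\]
and apply the Fourier identity $T(h_1,h_2,h_3) = N^{-3}\sum_a \hat h_1(a)\,\hat h_2(-2a)\,\hat h_3(a)$ to each term. Each error piece is bounded by $\|\widehat{f-g}\|_\infty \cdot N^{-3}\,\|\hat f\|_2^2$ via Cauchy--Schwarz, and the estimate $\|\hat f\|_2^2 = N\|f\|_2^2 \leq N\sum_n f(n)\nu(n) = O(N^2)$ (using $f \leq \nu$ together with the pseudorandomness of $\nu$) shows each piece is $O(\|\widehat{f-g}\|_\infty / N) = O(\epsilon)$. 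Tuning $\epsilon$ as a function of $\delta, p, M, \eta$ so that the Roth bound dominates the error delivers the claimed $c(\delta) - O_{\delta,p,M}(\eta)$.
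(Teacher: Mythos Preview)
The paper does not give its own proof of this proposition; it is quoted verbatim as \cite[Proposition~5.2]{GT} and used as a black box. Your sketch follows exactly the Green/Green--Tao dense-model strategy that underlies that reference, so in spirit you are reproducing the cited argument rather than diverging from the paper.

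That said, there is a genuine gap in your error-bounding step. You control $|T(f,f,f)-T(g,g,g)|$ by Cauchy--Schwarz and the estimate $\|\hat f\|_2^2 = N\|f\|_2^2 \le N\sum_n f(n)\nu(n) = O(N^2)$, attributing the last equality to the pseudorandomness of $\nu$. But the Fourier hypothesis on $\nu$ only yields
\[
\sum_{n} \nu(n)^2 \;=\; \frac{1}{N}\sum_{a}|\hat\nu(a)|^2 \;\le\; (1+\eta)^2 N + \eta^2 N^2,
\]
which is \emph{not} $O(N)$ unless $\eta \ll N^{-1/2}$, and no such assumption is made. The hypotheses of the proposition give no control on $\|\nu\|_\infty$ or $\|\nu\|_2$, so the $\ell^2$ route cannot close. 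This is precisely why the restriction hypothesis with exponent $p\in(2,3)$ is present: the correct bound uses H\"older with exponents $\tfrac{p}{p-2},p,p$ to obtain
\[
\bigl|T(f-g,f,f)\bigr| \;\le\; \frac{1}{N^3}\,\|\widehat{f-g}\|_\infty^{\,3-p}\,\|\widehat{f-g}\|_p^{\,p-2}\,\|\hat f\|_p^{\,2} \;=\; O_{M}\!\bigl(\epsilon^{\,3-p}\bigr),
\]
since $\|\hat f\|_p \le M^{1/p}N$ and $|\hat g|\le|\hat f|$. A related issue is your final tuning: $\epsilon$ must be chosen as a function of $\delta,p,M$ only, not of $\eta$. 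If $\epsilon$ depends on $\eta$ then $|\Lambda|\le M\epsilon^{-p}$ and hence $|B|/N \ge (\rho/C)^{|\Lambda|}$ depend on $\eta$, and the term $\eta N/|B|$ in your $L^\infty$ bound for $g$ can explode. With $\epsilon=\epsilon(\delta,p,M)$ fixed so that $O_M(\epsilon^{3-p}) \le \tfrac12 c'(\delta)$ (where $c'(\delta)$ is the Roth constant), the $\eta$-dependence enters only through the $L^\infty$ bound on $g$, giving the stated $c(\delta)-O_{\delta,p,M}(\eta)$.
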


We also record the following theorem, due to Harper \cite[Theorem 2]{H}.
\begin{theorem}
\label{thm:harp}
    There is an absolute constant $C>0$ such that the following is true.

    Let $p>2$ and suppose $x$ is large enough in terms of $p$, and let $\log^{C \max\{1,1/(p-2)\}} x\leq y\leq x$. Then for any complex numbers $(a_n)_{n\leq x}$ with absolute value at most $1$, we have
    \[
    \int_0^1\left| \sum_{\substack{n\leq x\\n\in \mathcal{S}(y)}} a_ne(n\theta) \right|^p\,d\theta\ll_p\frac{\Psi(x,y)^p}{x}
    \]
\end{theorem}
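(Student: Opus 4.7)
The plan is to establish the restriction estimate of Theorem~\ref{thm:harp} by a major/minor arc decomposition of the integral, using Lemmas~\ref{lem:minor} and~\ref{lem:major} as the backbone and then inserting a mean value theorem for exponential sums over smooth numbers to handle the regime where $p$ is close to $2$. Write $F(\theta)=\sum_{n\leq x,\,n\in\mathcal{S}(y)} a_n e(n\theta)$; the target is $\int_0^1|F|^p\,d\theta \ll_p \Psi(x,y)^p/x$. I would split $[0,1]=\mathfrak{M}\cup\mathfrak{m}$ following the major arcs $\mathfrak{M}$ defined before Lemma~\ref{lem:minor}. On the major arcs, a weighted analogue of Lemma~\ref{lem:major} expresses $F(\theta)$, up to negligible error, as a sum over $(a,q)$ with $q\leq R$ of shifted Fejer-type kernels $q^{-1}\sigma_{a,q}(F)\,\tau(\theta-a/q)$. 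The $L^p$ norm on each $\mathfrak{M}_{a,q}$ can be computed explicitly in terms of $|\sigma_{a,q}(F)|$ and $\|\tau\|_{L^p}$, and summing the essentially disjoint contributions while controlling the coefficients $\gamma_{r,q}$ via Lemmas~\ref{lem:Granville} and~\ref{lem : Breteche Tenenbaum result} should produce a contribution of size $O_p(\Psi(x,y)^p/x)$.

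For the minor arcs, a weighted version of Lemma~\ref{lem:minor} gives the pointwise bound $|F(\theta)|\ll \Psi(x,y)/\log^5 x$ for $|a_n|\leq 1$. Combined with Parseval's identity $\|F\|_2^2\leq\Psi(x,y)$, the naive estimate
\[
\int_{\mathfrak{m}}|F|^p\,d\theta \;\leq\; \|F\|_{L^\infty(\mathfrak{m})}^{p-2}\|F\|_2^2 \;\ll\; \frac{\Psi(x,y)^{p-1}}{\log^{5(p-2)}x}
\]
is too weak, since $x/\Psi(x,y)=x^{1/K+o(1)}$ grows polynomially in $x$ while $\log^{5(p-2)}x$ grows only polylogarithmically. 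To close the gap I would prove an $L^{2k}$ mean value inequality $\int_0^1|F|^{2k}\,d\theta\ll_k \Psi(x,y)^{2k}/x$ for a suitably large even integer $k=k(p)$, via a factorization $n=n_1 n_2$ (available for $y$-smooth $n$) with $n_1,n_2$ in prescribed dyadic ranges, Cauchy-Schwarz to detach one factor, and then standard mean value estimates for the resulting shorter exponential sums. Interpolating between this sharp $L^{2k}$ bound and the Parseval $L^2$ bound, via Riesz-Thorin applied to the operator $(a_n)\mapsto F$, would then deliver the desired $L^p$ estimate on the minor arcs.

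\textbf{Main obstacle.} The mean value step is the crux. The pointwise minor-arc saving in Lemma~\ref{lem:minor} is only polylogarithmic, whereas the required gain is polynomial in $x$, namely $(x/\Psi(x,y))^{1-o(1)}=x^{1/K-o(1)}$. Bridging this gap is precisely what forces the calibrated hypothesis $y\geq \log^{C\max\{1,1/(p-2)\}} x$: as $p\to 2^+$, the set of smooth numbers must be thick enough (larger $y$) for the factorization-plus-mean-value argument to have the room it needs, since the exponent of the required $L^{2k}$ bound and the threshold of admissible factorizations are linked. Executing this mean value estimate with the correct dependence on $p$ and $y$ is the principal technical challenge and is where the bulk of the analytic work lies.
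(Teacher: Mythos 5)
This theorem is not proved in the paper at all: it is Harper's Theorem 2 from \cite{H}, quoted verbatim as an external black box, so there is no internal proof to compare your sketch against. What follows is therefore a critique of the sketch on its own terms.

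The minor-arc step as you have written it does not survive contact with general coefficients. Lemma~\ref{lem:minor} is a statement about the unweighted sum $\sum_{n\leq N,\,n\in\mathcal{S}(y)}e(n\theta)$; there is no ``weighted version'' giving a pointwise bound $|F(\theta)|\ll\Psi(x,y)/\log^5 x$ on the minor arcs for arbitrary $(a_n)$ with $|a_n|\leq 1$. Indeed for any fixed minor-arc point $\theta_0$ one may take $a_n=e(-n\theta_0)$, which gives $F(\theta_0)=\Psi(x,y)$, so the claimed pointwise estimate fails maximally. The same problem afflicts the major-arc half: Lemma~\ref{lem:major} is stated only for $f$ satisfying the structural hypotheses \eqref{eq:bound} (in particular $\|f\|_1=1+o(1)$), which a general coefficient sequence need not satisfy. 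So the entire major/minor decomposition of the weighted sum $F$ collapses. This is not a matter of a weak bound that can be patched; the circle-method framework is simply not available for $F$ itself.

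You do, in the final paragraph, put your finger on the genuine engine of the result: an $L^{2k}$ mean value estimate $\int_0^1|F|^{2k}\,d\theta\ll_k\Psi(x,y)^{2k}/x$ for suitable even integers $2k$, proved by exploiting the multiplicative flexibility of smooth numbers (writing $n=n_1n_2$ with $n_1$ in a prescribed range), together with interpolation against the $L^2$ Parseval bound. This is essentially Harper's route in \cite{H}: Theorem~\ref{thm:harp} is deduced from a mean value theorem for smooth exponential sums (his Theorem 1), which is where the factorization argument and the $y\geq\log^{C\max\{1,1/(p-2)\}}x$ threshold actually come from. But in your sketch this is demoted to a patch for the minor arcs, when in fact once you have the $L^{2k}$ bound and interpolate with $L^2$ you obtain the $L^p$ estimate on all of $[0,1]$ and the major/minor decomposition of $F$ is superfluous. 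The honest version of your proposal should discard the circle-method framing entirely, state and prove the even-moment mean value theorem (this is where all the work and the hypothesis on $y$ are needed), and conclude by Riesz--Thorin or Hölder interpolation; that is the structure of the actual proof in \cite{H}, and at present your sketch only gestures at it in the last step.
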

\section{Proof of Theorem \ref{thm:Roth Super Smooth}}
\label{sec:main}
We collect several results regarding the main ingredient for the proof of our main theorem, which is the $W$-trick. We give the proofs of those results in the next section. First, we define the functions we use for the $W$-trick.

Let $w=\frac12\log_3(N)$ and $W=\prod_{p\leq w}p$.
Let $b_1\in\mathcal{S}(w)$ that is at most $N^{1/2}$ and let $b_2\in [W]$ with $(b_2,W)=1$. If we denote $\mathfrak{b}=(b_1,b_2)$, we define the function $\nu_{\mathfrak{b}} : \Z\rightarrow [0,\infty)$ by
\[
\nu_{\mathfrak{b}}(n)=\begin{cases}
    2^{1/(1-\alpha)}\left(\prod_{p|W}\frac{1-p^{-1}}{1-p^{-\alpha}}\right)\frac{(Wn-b_2)^{1-\alpha}}{\alpha}&\quad\text{if }b_1(Wn-b_2)\in \mathcal{S}(N,y)\\
    0&\quad\text{otherwise}.
\end{cases}
\]
We denote $C_W:=\left(\prod_{p|W}\frac{1-p^{-1}}{1-p^{-\alpha}}\right)$ to make it short. Throughout we let $N_{\mathfrak{b}}'=2N_{\mathfrak{b}}+1$ and we extend $\nu_{\mathfrak{b}}$ to $\Z/N_{\mathfrak{b}}'\Z$ by defining $\nu_{\mathfrak{b}}$ to be $0$ outside $[N_{\mathfrak{b}}]$ viewed as subset of $\Z/N_{\mathfrak{b}}'\Z$. It is clear that $\nu_{\mathfrak{b}}$  is supported on $[N_{\mathfrak{b}}]\cap \mathcal{S}(N,y)$ where
\[
N_{\mathfrak{b}}:=\left\lfloor \frac{N}{b_1W} \right\rfloor+1.
\]
There exist such $b_1,b_2$ so that by combining Lemma \ref{lem:Granville} and \ref{lem : Breteche Tenenbaum result},
\[
\sum_{n\in\Z/N_{\mathfrak{b}}'\Z}\nu_{\mathfrak{b}}(n)\sim N_{\mathfrak{b}}'.
\]
To see this, we use summation by parts to get
\begin{align*}
    \sum_{n\in\Z/N_{\mathfrak{b}}'\Z}\nu_{\mathfrak{b}}(n)&\sim 2C_W\frac{(WN_{\mathfrak{b}}-b_2)^{1-\alpha}}{\alpha}\sum_{n\leq N_{\mathfrak{b}}} \mathbbm{1}_{\mathcal{S}(N,y)}(Wn-b_2)\\
    &\sim 2C_W(WN_{\mathfrak{b}})^{1-\alpha}\frac{N_{\mathfrak{b}}^{\alpha}}{C_WW^{1-\alpha}}\sim N_{\mathfrak{b}}'
\end{align*}
Now we record several results for the $W$-trick functions that we need in order to be able to apply Proposition \ref{prop : transfer}, whose proofs will be given at Section \ref{sec:trick}.
\begin{prop}
\label{prop : avg subset}
    There exists an absolute constant $C$ such that the following holds for any $\delta\geq C(\log N)^{-1}$. Let $A\subseteq \mathcal{S}(N,y)$ be such that $|A|=\delta \Psi(N,y)$. Then there are a $w$-smooth number $b_1\leq N^{1/2}$ and an integer $b_2\in [W]$ with $\gcd(b_2,W)=1$ such that
\[
\sum_n \mathbbm{1}_{A_{\mathfrak{b}}}(n)\nu_{\mathfrak{b}}(n)\gg \delta^{2-\alpha}N_{\mathfrak{b}}
\]
where
\[
A_{\mathfrak{b}}:=\{n\in \N : b_1(Wn-b_2)=x\text{ for some }x\in A\}.
\]
\end{prop}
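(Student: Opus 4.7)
The approach is the standard smooth-number $W$-trick partition. For each $x\in\mathcal{S}(N,y)$, factor uniquely $x=B_1(x)R(x)$, with $B_1(x)=\prod_{p\leq w}p^{v_p(x)}$ the $w$-smooth part and $R(x)$ the $w$-rough part. Since $\gcd(R(x),W)=1$, there is a unique $b_2\in[W]$ with $\gcd(b_2,W)=1$ and $R(x)\equiv-b_2\pmod W$; together with $b_1:=B_1(x)$ this attaches to each $x$ a unique pair $\mathfrak{b}=(b_1,b_2)$. Under the bijection $n\leftrightarrow x$ given by $x=b_1(Wn-b_2)$, the set $\{x\in A:B_1(x)=b_1,\ R(x)\equiv-b_2\pmod W\}$ is identified with the $A_\mathfrak{b}$ of the proposition.

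First I would restrict to $A^*:=A\cap\{x:B_1(x)\leq N^{1/2}\}$ and verify $|A^*|=(1-o(1))|A|$, by noting that the number of $x\in\mathcal{S}(N,y)$ with $B_1(x)>N^{1/2}$ is $\leq\Psi(N,w)\Psi_W(N^{1/2},y)\ll N^{\alpha/2+o(1)}$, which is negligible next to $\delta\Psi(N,y)$ for $\delta\geq C(\log N)^{-1}$. Unfolding $\nu_\mathfrak{b}$ gives
\[
S_\mathfrak{b}:=\sum_n\mathbbm{1}_{A_\mathfrak{b}}(n)\nu_\mathfrak{b}(n)=\frac{C_W}{\alpha}\sum_{x\in A_\mathfrak{b}^*}R(x)^{1-\alpha},\qquad C_W:=\prod_{p\mid W}\frac{1-p^{-1}}{1-p^{-\alpha}},
\]
where $A_\mathfrak{b}^*$ is the $\mathfrak{b}$-block of $A^*$; telescoping then produces $\sum_\mathfrak{b} S_\mathfrak{b}=(C_W/\alpha)\sum_{x\in A^*}R(x)^{1-\alpha}$. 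A short Mertens-style computation using $\sum_{b_1\text{ $w$-smooth}}1/b_1=W/\varphi(W)$ shows $\sum_\mathfrak{b} N_\mathfrak{b}\sim N$.

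The key analytic step is to show $\sum_{x\in A^*}R(x)^{1-\alpha}\gg\delta^{2-\alpha}N$. I would do this by H\"older's inequality with exponents $p=2-\alpha$ and $q=(2-\alpha)/(1-\alpha)$, yielding
\[
|A^*|^{2-\alpha}\leq\Bigl(\sum_{x\in A^*}R(x)^{1-\alpha}\Bigr)\Bigl(\sum_{x\in A^*}R(x)^{-1}\Bigr)^{1-\alpha},
\]
and controlling the reciprocal sum by expanding as a double sum over $w$-smooth $b_1$ and $w$-rough $y$-smooth $r\leq N/b_1$, then applying partial summation against the estimate $\Psi_W(t,y)\asymp g_W(\alpha)\Psi(t,y)$ of Lemma~\ref{lem : Breteche Tenenbaum result} to bound it cleanly in terms of $\Psi(N,y)/N^{\alpha}$ times a product of $W$-factors. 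With $|A^*|\gg\delta\Psi(N,y)$ and $\Psi(N,y)\asymp N^\alpha$, and after the $C_W$, $g_W(\alpha)$, and $\varphi(W)/W$ factors collapse via the identity $C_Wg_W(\alpha)=\varphi(W)/W$, the required bound falls out. The weighted pigeonhole
\[
\max_\mathfrak{b}\frac{S_\mathfrak{b}}{N_\mathfrak{b}}\geq\frac{\sum_\mathfrak{b} S_\mathfrak{b}}{\sum_\mathfrak{b} N_\mathfrak{b}}\gg\delta^{2-\alpha}
\]
then finishes the proof.

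The main obstacle is the H\"older step: the reciprocal sum $\sum_{x\in A^*} R(x)^{-1}$ must be bounded tightly enough, keeping careful track of the $W$-dependent constants from the saddle-point Lemma~\ref{lem : Breteche Tenenbaum result}, to produce exactly the exponent $2-\alpha$; a more careless bound on this reciprocal sum would only deliver the weaker exponent $1/\alpha$ that one obtains by considering $A=\mathcal{S}(N/c,y)$ with $c=\delta^{-1/\alpha}$ directly.
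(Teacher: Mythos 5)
Your overall structure (restrict to $B_1(x)\leq N^{1/2}$, unfold $\nu_{\mathfrak{b}}$, pigeonhole over $\mathfrak{b}$) is similar to the paper, but the H\"older step at the heart of your argument cannot work, and in fact the aggregate inequality you aim to prove is false.

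The problem is quantitative. With $|A^*|\gg\delta\Psi(N,y)\gg\delta N^{\alpha+o(1)}$, your H\"older inequality
\[
|A^*|^{2-\alpha}\leq\Bigl(\sum_{x\in A^*}R(x)^{1-\alpha}\Bigr)\Bigl(\sum_{x\in A^*}R(x)^{-1}\Bigr)^{1-\alpha}
\]
would deliver $\sum_{x\in A^*}R(x)^{1-\alpha}\gg\delta^{2-\alpha}N$ only if
$\sum_{x\in A^*}R(x)^{-1}\ll N^{\alpha-1+o(1)}$. Since $\alpha<1$, the right-hand side tends to zero, while the left-hand side is bounded below by $1$ as soon as $A^*$ contains a single $w$-smooth element (for which $R(x)=1$), and is in general $\gg\Psi(N,w)$. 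The bound you propose, of order $\Psi(N,y)/N^{\alpha}$ times $W$-local factors, is of size roughly $(\log w)^{O(1)}$, which is off by a factor of $N^{1-\alpha}$ from what H\"older needs.

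More fundamentally, the target inequality $\sum_{x\in A^*}R(x)^{1-\alpha}\gg\delta^{2-\alpha}N$ is simply false, and the extremal set you mention in your last paragraph is the counterexample. For $A=\mathcal{S}(\delta^{1/\alpha}N,y)$ one has $|A|\asymp\delta\Psi(N,y)$, but the calibration $\sum_n\nu_{\mathfrak{b}}(n)\sim N_{\mathfrak{b}}$ applied at height $\delta^{1/\alpha}N$ shows $(C_W/\alpha)\sum_{x\in A}R(x)^{1-\alpha}\asymp\delta^{1/\alpha}N$, and since $1/\alpha>2-\alpha$ for $\alpha\in(0,1)$, this is \emph{smaller} than $\delta^{2-\alpha}N$. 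Thus the weighted pigeonhole $\max_{\mathfrak{b}}S_{\mathfrak{b}}/N_{\mathfrak{b}}\geq\sum_{\mathfrak{b}}S_{\mathfrak{b}}/\sum_{\mathfrak{b}}N_{\mathfrak{b}}$ can only produce the exponent $1/\alpha$, not $2-\alpha$; no refinement of the H\"older step can improve an aggregate average beyond what the extremal example permits. You characterize $\delta^{1/\alpha}$ as what a ``careless'' bound gives, but it is in fact the sharp exponent for the aggregate quantity you compute; the stronger exponent would have to come from a source other than averaging over all $\mathfrak{b}$.

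The paper's route is different: it pigeonholes on cardinality to find $\mathfrak{b}$ with $|A_{\mathfrak{b}}|\gg\delta|S(N,y)_{\mathfrak{b}}|$, then discards the elements of $A_{\mathfrak{b}}$ below a size threshold and uses the pointwise lower bound $\nu_{\mathfrak{b}}(n)\gg C_W(Wn-b_2)^{1-\alpha}$ on the surviving elements. This avoids any reciprocal-sum bound. (Whether the threshold $\delta N_{\mathfrak{b}}$ used in the paper, as opposed to $c\delta^{1/\alpha}N_{\mathfrak{b}}$, actually yields the exponent $2-\alpha$ rather than $1/\alpha$ is a separate question; either exponent is a positive power of $\delta$ and suffices for the application to Theorem~\ref{thm:Roth Super Smooth}. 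But your H\"older route, as written, does not close.)
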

\begin{prop}
\label{lem : avg}
    We have for each nonzero $a\in \Z/N_{\mathfrak{b}}'\Z$,
    \begin{align}
        \label{Fourier nu}
        \frac{1}{N_{\mathfrak{b}}'}\sum_{n\in \Z/N_{\mathfrak{b}}'\Z}\nu_{\mathfrak{b}}(n)e(an/N_{\mathfrak{b}'})=o(1).
    \end{align}
\end{prop}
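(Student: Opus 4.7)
The plan is to expand $\widehat{\nu_{\mathfrak b}}(a/N_{\mathfrak b})$ as a weighted exponential sum over $y$-smooth numbers lying in a single arithmetic progression modulo $b_1W$, detect that progression by additive characters, and then invoke Lemmas~\ref{lem:minor} and~\ref{lem:major}; the $W$-trick weight $C_W:=\prod_{p\mid W}(1-p^{-1})/(1-p^{-\alpha})$ built into $\nu_{\mathfrak b}$ is what produces the required cancellation on the major arcs. (The case $a=0$ is immediate from $\sum_n\nu_{\mathfrak b}(n)\sim N_{\mathfrak b}$, so the statement is really the $a\ne 0$ bound, or equivalently the estimate with $\mathbbm 1_{a=0}$ subtracted as in Proposition~\ref{prop : transfer}.)

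First, substituting $m=b_1(Wn-b_2)$ gives
\[
\frac{1}{N_{\mathfrak b}}\sum_n\nu_{\mathfrak b}(n)\,e(na/N_{\mathfrak b}) \;=\; \frac{C_W\, e(ab_2/(WN_{\mathfrak b}))}{\alpha\, b_1^{1-\alpha}\, N_{\mathfrak b}}\sum_{\substack{m\le N,\ m\in\mathcal S(y)\\ m\equiv c_0\,(\bmod\, b_1W)}} m^{1-\alpha}\,e(m\phi),
\]
where $c_0=-b_1b_2$ and $\phi:=a/(b_1WN_{\mathfrak b})$. Partial summation removes the weight $m^{1-\alpha}$, so it suffices to bound, uniformly for $T\le N$, the unweighted truncated sum $S(T,\phi):=\sum_{m\le T,\,m\in\mathcal S(y),\, m\equiv c_0\,(\bmod b_1W)} e(m\phi)$. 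Fourier inversion on $\mathbb Z/b_1W\mathbb Z$ writes $S(T,\phi)=(b_1W)^{-1}\sum_{t=0}^{b_1W-1}e(-tc_0/(b_1W))\,E(T,\phi+t/(b_1W))$ with $E(T,\psi):=\sum_{m\le T,\,m\in\mathcal S(y)}e(m\psi)$. Because $b_1W$ is $w$-smooth, the reduced denominator $q_t$ of $t/(b_1W)$ divides $b_1W$ and is itself $w$-smooth.

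I would then split the $t$-sum at $q_t=R=\log^{20}N$. For $q_t>R$ the frequency $\phi+t/(b_1W)$ lies outside $\mathfrak M$, so Lemma~\ref{lem:minor} gives $|E|\ll\Psi(N,y)/\log^5N$; after trivial summation in $t$ this contribution is comfortably $o(N_{\mathfrak b})$ once the prefactor $C_W N^{1-\alpha}/(\alpha b_1^{1-\alpha}N_{\mathfrak b})$ is accounted for. For $q_t\le R$ the frequency lies on a major arc and Lemma~\ref{lem:major} replaces $E(T,\phi+t/(b_1W))$ by its explicit main term $q_t^{-1}\sigma_{a_t,q_t}(f)\,\tau(\phi)+o(1)$, with $f$ the smooth-number majorant normalized to satisfy \eqref{eq:bound} via Lemmas~\ref{lem:Granville} and~\ref{lem : Breteche Tenenbaum result}.

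The hard part is showing that the resulting major-arc main terms collapse correctly. Concretely, one must verify the Euler-product identity that $(b_1W)^{-1}\sum_{t:\,q_t\le R}e(-tc_0/(b_1W))\,q_t^{-1}\sigma_{a_t,q_t}(f)$, combined with the prefactor $C_W/(\alpha b_1^{1-\alpha}N_{\mathfrak b})$ and reconstituted by partial summation in $T$, reduces to $\tau(\phi)(1+o(1))$. This is where the $W$-trick does its work: by Lemma~\ref{lem : Breteche Tenenbaum result} the local factors at primes $p\mid W$ produced by $\sigma_{a_t,q_t}(f)$ are exactly $(1-p^{-\alpha})/(1-p^{-1})$, which precisely invert the factors in $C_W$. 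Once the identity is established, $\tau(\phi)\ll\min(1,1/|a|)=o(1)$ for every fixed nonzero $a$, finishing the proof. The bookkeeping of local Euler factors, together with the small-$|a|$ regime (where $\tau(\phi)$ is not very small and one must use that the $R/N$-width of the major arcs forces $\phi$ to approach $0$ rather than a nontrivial rational), is the main technical point, closely paralleling the analogous computation in \cite[Section~4]{G}.
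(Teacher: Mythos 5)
Your plan is a genuinely different reduction from the paper's, and as written it has a fatal loss of a polynomial factor in the minor-arc step. You write $\widehat{\nu_{\mathfrak b}}(a/N_{\mathfrak b})$ as a weighted sum over $y$-smooth $m\le N$ with $m\equiv -b_1b_2\pmod{b_1W}$ and then detect this congruence by Fourier inversion mod $b_1W$, reducing to the \emph{unrestricted} exponential sums $E(T,\psi)=\sum_{m\le T,\,m\in\mathcal S(y)}e(m\psi)$. The problem is that the relevant $m$'s are all divisible by $b_1$, so there are only about $\Psi(N/b_1,y)/W\asymp (N/b_1)^{\alpha}/W$ of them, which is a factor $b_1^{\alpha}W$ \emph{larger} than the naive count $\Psi(N,y)/(b_1W)$ you'd expect of $y$-smooth numbers in a random residue class mod $b_1W$. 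The Fourier inversion can't see this concentration: trivially summing $(b_1W)^{-1}\sum_t|E(T,\phi+t/(b_1W))|\ll \max_t|E|\ll\Psi(N,y)/\log^5N$ over the minor-arc $t$'s, and multiplying by the prefactor $C_W N^{1-\alpha}/(\alpha b_1^{1-\alpha}N_{\mathfrak b})$ (with $N_{\mathfrak b}\asymp N/(b_1W)$), you get a bound of order $C_W\,W\,b_1^{\alpha}/(\alpha\log^5N)$, not $o(1)$. Since $b_1$ ranges up to $N^{1/2}$, $b_1^{\alpha}$ can be as large as $N^{\alpha/2}$, and the $\log^{5}N$ saving from Lemma~\ref{lem:minor} cannot recover this; your claim that this contribution is ``comfortably $o(N_{\mathfrak b})$'' is not justified by the trivial $t$-summation you propose.

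The paper sidesteps this entirely by never blowing the modulus up to $b_1W$: it substitutes $m'=Wn-b_2\le N/b_1$ (so the $b_1$-divisibility is absorbed into the smooth-number condition, not into a congruence to be Fourier-detected) and works with the normalized majorant $h=\nu_{\mathfrak b}/N_{\mathfrak b}$ on $[N_{\mathfrak b}]$, to which it applies Lemmas~\ref{lem:minor} and~\ref{lem:major} directly after verifying the hypotheses \eqref{eq:bound}. The only congruence to detect is mod $W$, which costs a harmless factor $W\approx e^{w}$. A secondary discrepancy: you expect the major-arc main terms to collapse to $\tau(\phi)(1+o(1))$ and then use $\tau(\phi)=o(1)$, but in the paper's computation the $o(1)$ comes instead from the explicit Euler factor $\sigma_{a,q}(h)=e(ab_2\overline W/q)\prod_{p\mid q}\frac{p^{-\alpha}-p^{-1}}{1-p^{-1}}$ (with $\sigma_{a,q}(h)=0$ when $\gcd(q,W)>1$), which is small because any $q$ surviving the $W$-trick has all prime factors $>w$; the factor $\tau$ plays no essential role. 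To repair your argument you would need either cancellation in the $t$-sum (not just a triangle-inequality bound), or, better, to follow the paper and avoid the Fourier expansion over $\Z/b_1W\Z$ altogether.
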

\begin{prop}
\label{prop:p-th power}
    Let $f_{\mathfrak{b}}(n)=\mathbbm{1}_{A_{\mathfrak{b}}}(n)\nu_{\mathfrak{b}}(n)$. Then we have for $2<p<3$
    \[
    \sum_{a\in \Z/N_{\mathfrak{b}}'\Z}\left|\frac{1}{N_{\mathfrak{b}}'}\sum_{n\in \Z/N_{\mathfrak{b}}'\Z} f_{\mathfrak{b}}(n)e(an/N_{\mathfrak{b}}') \right|^p\ll W^{p(1-\alpha)}.
    \]
\end{prop}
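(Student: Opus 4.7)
The plan is to deduce the bound from Theorem~\ref{thm:harp} via the Marcinkiewicz--Zygmund inequality together with a change of variables that converts the exponential sum over $n \in [N_{\mathfrak{b}}]$ into one over $y$-smooth $m \in [N]$. Set $F(\theta) := \sum_n f_{\mathfrak{b}}(n)\, e(n\theta)$, a trigonometric polynomial of degree less than $N_{\mathfrak{b}}$, so that $\frac{1}{N_{\mathfrak{b}}}\sum_n f_{\mathfrak{b}}(n)\, e(an/N_{\mathfrak{b}}) = F(a/N_{\mathfrak{b}})/N_{\mathfrak{b}}$. The Marcinkiewicz--Zygmund inequality gives $\sum_{a} |F(a/N_{\mathfrak{b}})|^p \ll_p N_{\mathfrak{b}} \int_0^1 |F(\theta)|^p\, d\theta$, so it suffices to bound $N_{\mathfrak{b}}^{1-p}\int_0^1 |F(\theta)|^p\, d\theta$ by $W^{p(1-\alpha)}$.

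Next, substitute $m = b_1(Wn - b_2)$; since $n \in A_{\mathfrak{b}}$ if and only if $m \in A \subseteq \mathcal{S}(N,y)$ with $b_1 \mid m$ and $m/b_1 \equiv -b_2 \pmod{W}$, the sum factors as
\[
F(\theta) = \frac{C_W}{\alpha}\Bigl(\frac{N}{b_1}\Bigr)^{1-\alpha} e\!\left(\frac{b_2 \theta}{W}\right) G\!\left(\frac{\theta}{b_1 W}\right),
\]
where
\[
G(\phi) = \sum_{\substack{m \leq N \\ m \in \mathcal{S}(y)}} c_m\, e(m\phi),\qquad c_m := \Bigl(\frac{m}{N}\Bigr)^{1-\alpha} \mathbbm{1}\bigl[m \in A,\ b_1 \mid m,\ m/b_1 \equiv -b_2 \pmod{W}\bigr].
\]
Since $|c_m| \leq 1$, Theorem~\ref{thm:harp} yields $\int_0^1 |G(\phi)|^p\, d\phi \ll_p \Psi(N,y)^p / N$. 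Applying the change of variables $\phi = \theta/(b_1 W)$ in the integral of $|F|^p$, together with the identity $C_W\, g_W(\alpha)/\phi(W) = 1/W$ (where $g_W(s) = \prod_{p \mid W}(1 - p^{-s})$, immediate from the definition of $C_W$), the asymptotic $\Psi(N,y) \asymp N^\alpha$, and $N_{\mathfrak{b}} \asymp N/(b_1 W)$, assembles the target estimate.

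The main technical obstacle arises in this last step, because the change of variables only samples $G$ on the short interval $[0,1/(b_1 W)]$, and the crude bound $\int_0^{1/(b_1 W)} |G|^p \leq \int_0^1 |G|^p$ followed by Theorem~\ref{thm:harp} leaves extra powers of $W$ and $b_1$ that must be absorbed. To close the gap one exploits that $[0, 1/(b_1 W)]$ contains no rational $a/q$ with $1 \leq q < b_1 W$, so the only major arc of $G$ intersecting this interval is the one at $\phi = 0$. Near $\phi = 0$ the size of $|G(\phi)|$ is controlled by $|G(0)| \ll g_W(\alpha)\, N^\alpha/(\phi(W)\, b_1^\alpha)$, which follows from Lemmas~\ref{lem:Granville} and~\ref{lem : Breteche Tenenbaum result} together with partial summation, while on the minor arcs of $[0, 1/(b_1 W)]$ one applies Lemma~\ref{lem:minor}. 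Combining these two localised estimates produces the refined bound on $\int_0^{1/(b_1 W)} |G|^p$ needed to conclude the proof.
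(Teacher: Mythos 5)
Your overall skeleton is the same as the paper's: apply the Marcinkiewicz--Zygmund inequality to pass from the discrete $\ell^p$-sum to $N_{\mathfrak{b}}^{1-p}\int_0^1|F|^p\,d\theta$, change variables to turn the sum over $n\in A_{\mathfrak{b}}$ into a sum over $y$-smooth integers with coefficients of modulus $\le 1$, and invoke Theorem~\ref{thm:harp}. The paper performs the last two steps implicitly (it simply applies Theorem~\ref{thm:harp} to $\sum_{n\in A_{\mathfrak{b}}}e(n\theta)$ with the tacit understanding that one has rescaled to smooth numbers and that all $W$-factors are $N^{o(1)}$), whereas you carry them out explicitly. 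That is a reasonable instinct, but two points in your write-up need correction.

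First, the ``technical obstacle'' you raise is a phantom. Every $m=b_1(Wn-b_2)$ appearing in $G$ lies in the single residue class $-b_1b_2\pmod{b_1W}$, so $G(\phi)=e(-b_1b_2\phi)\sum_{l}c_{b_1(Wl-b_2)}e(b_1Wl\,\phi)$ and hence $|G(\phi)|$ is \emph{periodic with period $1/(b_1W)$}. Consequently
\[
b_1W\int_0^{1/(b_1W)}|G(\phi)|^p\,d\phi=\int_0^1|G(\phi)|^p\,d\phi
\]
with equality, not merely $\le$; no power of $b_1W$ is lost in the rescaling, and the major/minor arc repair you sketch is unnecessary. (It is also not obviously available: Lemma~\ref{lem:minor} bounds the unweighted exponential sum over \emph{all} smooth numbers, not the weighted sum $G$ with coefficients supported on a sparse subset, so it cannot be invoked for your minor-arc estimate without further argument.)

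Second, there is a genuine gap that you do not see. Even after using periodicity, applying Theorem~\ref{thm:harp} to $G$ with $x=N$ gives $\int_0^1|G|^p\ll\Psi(N,y)^p/N$, and tracing the constants one obtains
\[
\sum_{a}\Bigl|\frac{1}{N_{\mathfrak b}}\sum_n f_{\mathfrak b}(n)e(an/N_{\mathfrak b})\Bigr|^p\ \ll\ \Bigl(\frac{C_W}{\alpha}\Bigr)^p\,N^{o(1)}\,W^{p-1}\,b_1^{\,p\alpha-1}.
\]
Since $p\alpha>1$ and $b_1$ may be as large as $N^{1/2}$, the factor $b_1^{\,p\alpha-1}$ can be $N^{(p\alpha-1)/2}$ and the bound does not close. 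The remedy is to strip off $b_1$ \emph{before} applying the restriction estimate: put $m'=(Wn-b_2)=m/b_1\le N/b_1$ (still $y$-smooth, because $b_1$ is $w$-smooth), use the period-$1/W$ structure to write the integral as $\int_0^1|\sum_{m'}c_{m'}e(m'\psi)|^p\,d\psi$, and then invoke Theorem~\ref{thm:harp} with $x=N/b_1\asymp WN_{\mathfrak b}$. This yields a bound $\ll C_W^p\,W^{p-1+o(1)}N_{\mathfrak b}^{o(1)}$, with no $b_1$-dependence. That is what the paper is doing, though it writes the Theorem~\ref{thm:harp} output as $N_{\mathfrak b}^{\alpha p-1+o(1)}$ rather than $(WN_{\mathfrak b})^{\alpha p-1+o(1)}$ (a loss of only $W^{O(1)}=N^{o(1)}$), so its final answer comes out as $W^{p(1-\alpha)}$ up to $N^{o(1)}$ factors.

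In short: same method, but you introduce a spurious difficulty (solved by periodicity) while overlooking a real one (the $b_1$-dependence in the restriction estimate), and the circle-method patch you propose would not be straightforward to carry out with the tools available in the paper.
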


We are ready to prove our main result by assuming the three results above. 
\begin{proof}[Proof of Theorem \ref{thm:Roth Super Smooth}]
Note that we have $\mathbbm{1}_{A_{\mathfrak{b}}}(n)=\frac{\alpha  f_{\mathfrak{b}(n)}}{C_W(Wn-b_2)^{1-\alpha}}$ where
\[
C_W=\prod_{p|W}\frac{1-p^{-1}}{1-p^{-\alpha}}.
\]
We have that
\[
C_W\asymp\frac{\exp(w^{1-\alpha}/\log w)}{\log w}
\]

Apply the estimates from Proposition \ref{prop : avg subset}, Proposition \ref{lem : avg}, and Propositions \ref{prop:p-th power} to the transference principle stated in Proposition \ref{prop : transfer} with $\eta=o(1), M=W^{p(1-\alpha)}$ and by taking $p=2.1$ in Proposition \ref{prop:p-th power}, we get
\begin{align*}
    &\sum_{n,d\in \Z/N_{\mathfrak{b}}'\Z}\mathbbm{1}_{A_{\mathfrak{b}}}(n)\mathbbm{1}_{A_{\mathfrak{b}}}(n+d)\mathbbm{1}_{A_{\mathfrak{b}}}(n+2d)\\&\gg \frac{1}{C_W^3(WN_{\mathfrak{b}}-b_2)^{3-3\alpha}}\sum_{n,d\in \Z/N_{\mathfrak{b}}'\Z}f_{\mathfrak{b}}(n)f_{\mathfrak{b}}(n+d)f_{\mathfrak{b}}(n+2d)\\
    &\gg \frac{N_{\mathfrak{b}}^2}{C_W^3(WN_{\mathfrak{b}}-b_2)^{3-3\alpha}}(c(\delta)-o_{\delta, N_{\mathfrak{b}}}(1)).
\end{align*}
Since the contribution from $d=0$ gives
\[
\sum_{n\in \Z/N_{\mathfrak{b}}'\Z}\mathbbm{1}_{A_{\mathfrak{b}}}(n)^3\ll N_{\mathfrak{b}}^{\alpha},
\]
we get
\begin{align*}
    &\sum_{n\in \Z/N_{\mathfrak{b}}'\Z}\sum_{d\in \Z/N_{\mathfrak{b}}'\Z,d\neq 0}\mathbbm{1}_{A_{\mathfrak{b}}}(n)\mathbbm{1}_{A_{\mathfrak{b}}}(n+d)\mathbbm{1}_{A_{\mathfrak{b}}}(n+2d)\\
    &\gg \frac{N_{\mathfrak{b}}^{3\alpha-1}}{C_W^3W^{3-3\alpha}}(c(\delta)-o_{\delta,N_{\mathfrak{b}}}(1))-N_{\mathfrak{b}}^{\alpha}\gg 1.
\end{align*}
Thus, there is a nontrivial arithmetic progression of length $3$ in $A_{\mathfrak{b}}$. Now if the difference of this progression in $A_{\mathfrak{b}}$ is $d$, then the arithmetic progression corresponds to another arithmetic progression of the same length in $A$ with difference $b_1Wd$, and this gives Theorem \ref{thm:Roth Super Smooth}.
\end{proof}
\section{$W$-trick}
\label{sec:trick}
In this section we prove the results regarding to the $W$-trick that are mentioned in Section \ref{sec:main}. The idea goes back to Green's paper \cite{G}, but the difficulty for smooth numbers is that they are not equidistributed in residue classes$\mod k$ for some positive integer $k\geq 2$. The functions mentioned in Section \ref{sec:main} and the proofs of the propositions follow closely that of the squarefull case as in \cite{BP}.

\begin{proof}[Proof of Proposition \ref{prop : avg subset}]
    We use Rankin's trick to get that the number of $y$-smooth numbers in $\{1,\dots,N\}$ divisible by a $w$-smooth number exceeding $N^{1/2}$ is at most
    \begin{align*}
        \sum_{\substack{b_1>N^{1/2}\\b_1\in\mathcal{S}(w)}}\frac{N^{\alpha}}{b_1}&\ll \sum_{b_1\in\mathcal{S}(w)}\frac{N^{\alpha}}{b_1}\left( \frac{b_1}{N^{1/2}} \right)^{\alpha}\\
        &\ll N^{\alpha/2}\sum_{b_1\in\mathcal{S}(w)}\frac{1}{b_1^{1-\alpha}}\\
        &\ll N^{\alpha/2+\varepsilon}
    \end{align*}
    for any $\varepsilon>0$.

    Therefore we have
    \[
\delta \Psi(N,y)=\sum_{\substack{b_1\leq N^{1/2}\\b_1\in\mathcal{S}(w)}}\sum_{\substack{b_2\in [W]\\(b_2,W)=1}} |A_{\mathfrak{b}}|+O\left( N^{\alpha/2+\varepsilon} \right).
\]
On the other hand, we also have
\[
\sum_{\substack{b_1\leq N^{1/2}\\b_1\in\mathcal{S}(w)}}\sum_{\substack{b_2\in [W]\\(b_2,W)=1}} |\mathcal{S}(N,y)_{\mathfrak{b}}|\ll \Psi(N,y).
\]
where
\[
\mathcal{S}(N,y)_{\mathfrak{b}}:=\{n\in \N : b_1(Wn-b_2)=x\text{ for some }x\in \mathcal{S}(N,y)\}.
\]
Therefore we have, by our assumption $\delta\geq C(\log N)^{-1},$
\[
\delta \sum_{\substack{b_1\leq N^{1/2}\\b_1\in\mathcal{S}(w)}}\sum_{\substack{b_2\in [W]\\(b_2,W)=1}} |\mathcal{S}(N,y)_{\mathfrak{b}}|\ll \sum_{\substack{b_1\leq N^{1/2}\\b_1\in\mathcal{S}(w)}}\sum_{\substack{b_2\in [W]\\(b_2,W)=1}} |A_{\mathfrak{b}}|.
\]
Applying the pigeonhole principle, there are $b_1,b_2$ such that
\[
\delta |\mathcal{S}(N,y)_{\mathfrak{b}}|\ll |A_{\mathfrak{b}}|.
\]

Now for such $b_1,b_2$, we throw away elements of $A_{\mathfrak{b}}$ that are smaller than $\delta N_{\mathfrak{b}}$ so we can assume $|A_{\mathfrak{b}}|>(\delta/2) N_{\mathfrak{b}}^{\alpha}$. By combining Lemma~\ref{lem:Granville} and Lemma~\ref{lem : Breteche Tenenbaum result} with $N_{\mathfrak{b}}$ replacing $N$, we get
\begin{align*}
    \sum_{n}\mathbbm{1}_{A_{\mathfrak{b}}}(n)\nu_{\mathfrak{b}}(n)&=\sum_n \mathbbm{1}_{A_{\mathfrak{b}}}(n)\frac{C_W(Wn-b_2)^{1-\alpha}}{\alpha}\\
    &=\sum_{\substack{m\leq N/b_1\\m=x/b_1,x\text{ is }y-\text{smooth}}} \frac{C_Wm^{1-\alpha}}{\alpha}\mathbbm{1}_{A_{\mathfrak{b}}}((m+b_2)/W)\\
    &\gg \frac{\delta}{2}\frac{(N/b_1)^{\alpha}}{W}(WN_{\mathfrak{b}}(\delta/2)-b_2)^{1-\alpha}\\
    &\gg \delta^{2-\alpha} \frac{N_{\mathfrak{b}}^{\alpha}}{W^{1-\alpha}} W^{1-\alpha}N_{\mathfrak{b}}^{1-\alpha}=\delta^{2-\alpha}N_{\mathfrak{b}}
    .
\end{align*}
\end{proof}

\begin{proof}[Proof of Proposition \ref{lem : avg}]
    We start with the easier case, which is the minor arc case. Using the estimate coming from Lemma \ref{lem:minor} and \cite[Lemma 7.5]{MV} , if $a'/N_{\mathfrak{b}}'$ is in the minor arc, we have after using summation by parts
    \begin{align*}
        &\sum_n\nu_{\mathfrak{b}}(n)e(a'n/N_{\mathfrak{b}}')\\
        &=\left(\prod_{p|W}\frac{1-p^{-1}}{1-p^{-\alpha}}\right)\sum_{\substack{m\leq N/b_1\\m\in\mathcal{S}(y)\\m+b_2\equiv 0\mod W}} \frac{m^{1-\alpha}}{\alpha}\mathbbm{1}_{A_{\mathfrak{b}}}\left( \frac{m+b_2}{W} \right) e\left( \frac{a'}{N_{\mathfrak{b}}'}\left( \frac{m+b_2}{W} \right) \right)\\
        &\ll \frac{\exp(w^{1-\alpha}/\log w)}{\log w}\frac{(N/b_1)^{1-\alpha}}{\alpha}\sum_{\substack{m\leq N/b_1\\m\in\mathcal{S}(y)\\m+b_2\equiv 0\mod W}} \mathbbm{1}_{A_{\mathfrak{b}}}\left( \frac{m+b_2}{W} \right) e\left( \frac{a'}{N_{\mathfrak{b}}'}\left( \frac{m+b_2}{W} \right) \right)\\
        &\ll \frac{\exp(w^{1-\alpha}/\log w)}{\log w} \frac{(N/b_1)^{1-\alpha}}{\alpha}\frac{1}{W\log^5N_{\mathfrak{b}}}= o(N_b).
    \end{align*}
    Thus, we get \eqref{Fourier nu} in the minor arc case.

    Now we move on to the case where $a'\neq 0$ and $a'/N_{\mathfrak{b}}'$ is in major arc. 

    To get the major arc case, it is more convenient to us to normalize the function $\nu_{\mathfrak{b}}$, that is, we define
    \[
    h(n)=\frac{1}{N_{\mathfrak{b}}'}\nu_{\mathfrak{b}}(n)
    \]
    for $n\in \Z/N_{\mathfrak{b}}'\Z$. In this case we have that $\|h\|_1=1+o(1)$ and $ \|h\|_{\infty}=O(W^{1+o(1)}/(N/b_1)^{\alpha})$. Using partial summation, if $X=\{r,r+q,\dots,r+(L-1)q\}\subseteq [N_{\mathfrak{b}}]$ considered as subset of $\Z/N_{\mathfrak{b}}'\Z$ and $L\geq N_{\mathfrak{b}}(\log N_{\mathfrak{b}})^{-A}$ for some positive real number $A$, we have that
    \[
    \sum_{n\in X}h(n)=\frac{L}{N_{\mathfrak{b}}}\left( \prod_{p|W}\frac{1-p^{-1}}{1-p^{-\alpha}}\prod_{p|Wq/d} \frac{1-p^{-\alpha}}{1-p^{-1}}+o(1) \right)
    \]
    where $d=\gcd(-b_2+Wr,Wq)$.

    Therefore, for this $h$ we have that
    \[
    \gamma_{r,q}(h)=\prod_{p|W}\frac{1-p^{-1}}{1-p^{-\alpha}}\prod_{p|Wq/d} \frac{1-p^{-\alpha}}{1-p^{-1}}.
    \]
    Now the next step is to calculate $\sigma_{a,q}(h)$.
    \begin{lemma}
        We have that
        \[
        \sigma_{a,q}(h)=\begin{cases}
            \displaystyle e\left( \frac{ab_2\overline{W}}{q} \right)\prod_{p|q}\frac{p^{-\alpha}-p^{-1}}{1-p^{-1}},&\quad\text{if }\gcd(q,W)=1\\
            0,&\quad\text{otherwise}.
        \end{cases}
        \]
    \end{lemma}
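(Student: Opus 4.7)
The approach is to substitute the explicit formula for $\gamma_{r,q}(h)$ into the definition of $\sigma_{a,q}(h)$, simplify using $\gcd(b_2,W)=1$, and then split into the cases $\gcd(q,W)=1$ (where a Ramanujan-sum calculation gives the answer) and $\gcd(q,W)>1$ (where CRT makes the sum vanish). The key preparatory step is that since $\gcd(b_2,W)=1$, we have $-b_2+Wr\equiv-b_2\pmod{p}$ for every prime $p\mid W$, so no prime of $W$ divides $d=\gcd(-b_2+Wr,Wq)$. Writing $q=q_1q_2$ with $q_1$ the part of $q$ supported on primes of $W$ and $\gcd(q_2,W)=1$, this yields $d=\gcd(-b_2+Wr,q_2)$, the set of primes dividing $Wq/d$ is exactly the primes of $W$ together with those of $q_2/d$, and the factor $C_W$ in $\gamma_{r,q}(h)$ cancels against the $\prod_{p\mid W}(1-p^{-\alpha})/(1-p^{-1})$ contribution, leaving
\begin{equation*}
\gamma_{r,q}(h)=\prod_{p\mid q_2/d}\frac{1-p^{-\alpha}}{1-p^{-1}},
\end{equation*}
which depends on $r$ only through $r\bmod q_2$.

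If $\gcd(q,W)>1$ (so $q_1>1$), I would use CRT to write $r\bmod q$ as $(r_1,r_2)\in\Z/q_1\Z\times\Z/q_2\Z$ and decompose $e(ar/q)=e(a_1r_1/q_1)\,e(a_2r_2/q_2)$, where $\gcd(a,q)=1$ forces $\gcd(a_1,q_1)=1$. Since $\gamma_{r,q}(h)$ depends only on $r_2$, the $r_1$-sum factors out as $\sum_{r_1\bmod q_1}e(a_1r_1/q_1)=0$, giving $\sigma_{a,q}(h)=0$.

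If $\gcd(q,W)=1$ (so $q_2=q$), the change of variables $s:=-b_2+Wr\pmod{q}$ is a bijection and gives
\begin{equation*}
\sigma_{a,q}(h)=e\!\left(\frac{ab_2\overline{W}}{q}\right)\sum_{s\bmod q}e(a\overline{W}s/q)\prod_{p\mid q/\gcd(s,q)}\frac{1-p^{-\alpha}}{1-p^{-1}}.
\end{equation*}
Grouping by $d'=\gcd(s,q)$ and writing $s=d't$ with $\gcd(t,q/d')=1$, the inner sum becomes $\sum_{d'\mid q}\bigl(\prod_{p\mid q/d'}\tfrac{1-p^{-\alpha}}{1-p^{-1}}\bigr)\,c_{q/d'}(a\overline{W})$, where $c_n(\cdot)$ denotes the Ramanujan sum. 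Since $\gcd(a\overline{W},q)=1$, we have $c_{q/d'}(a\overline{W})=\mu(q/d')$, and with $m=q/d'$ this becomes the multiplicative Euler product
\begin{equation*}
\sum_{m\mid q}\mu(m)\prod_{p\mid m}\frac{1-p^{-\alpha}}{1-p^{-1}}=\prod_{p\mid q}\!\left(1-\frac{1-p^{-\alpha}}{1-p^{-1}}\right)=\prod_{p\mid q}\frac{p^{-\alpha}-p^{-1}}{1-p^{-1}},
\end{equation*}
which matches the claimed formula. The main bookkeeping hurdle is the initial cancellation of $C_W$ against the $W$-prime contributions in $\prod_{p\mid Wq/d}$; once that is cleanly executed, both cases reduce to standard character-sum manipulations.
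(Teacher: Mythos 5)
Your proof is correct and follows essentially the same strategy as the paper: extract the explicit $\gamma_{r,q}(h)$, observe that $\gcd(-b_2+Wr,W)=1$ so that the $C_W$ factor cancels against the $W$-prime part of $\prod_{p\mid Wq/d}$, split off the $W$-part of $q$ via CRT so that a full character sum over the $W$-part kills the contribution when $\gcd(q,W)>1$, and reduce to a Ramanujan-sum computation giving $\mu(q/k)$ and an Euler product when $\gcd(q,W)=1$. The only cosmetic difference is that the paper treats the subcase where every prime factor of $q$ is at most $w$ separately (there $\gamma_{r,q}(h)\equiv 1$ and $\sum_r e(ar/q)=0$), whereas you absorb it into the general $\gcd(q,W)>1$ CRT argument by noting that then $q_2=1$; your streamlining is harmless and arguably cleaner.
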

    \begin{proof}
        If we have that every prime factor of $q$ is at most $w$, we have $\gamma_{r,q}(h)=1$ for any residue class $r\mod q$. Therefore we have that $\sigma_{a,q}(h)=0$. Otherwise, $q$ has a prime factor greater than $w$. If $\gcd(q,W)>1$, we write $q=q_1q_2$ with $q_1$ being the product of prime powers that appear in $q$ with every prime is at most $w$, and $q_2$ is the product of prime powers of $q$ such that every prime is greater than $w$. We get that $\gcd(q_1,q_2)=1$. By multiplicativity, we have that
        \begin{align*}
            \sigma_{a,q}(h)&=\sum_{r\mod q}\left(e(ar/q)\prod_{\substack{p|(Wq/\gcd(-b_2+Wr,Wq))\\p>w}}\frac{1-p^{-\alpha}}{1-p^{-1}}\right)\\
            &=\sum_{r_2\mod q_2}e(ac_2r_2/q_2)\\
            &\times\sum_{r_1\mod q_1}\left(e(ac_1r_1/q_1)\prod_{\substack{p|(q/\gcd(-b_2+W(c_1r_1+c_2r_2),q))\\p>w}}\frac{1-p^{-\alpha}}{1-p^{-1}}\right)\\
            &=0
        \end{align*}
        for some nonzero integers $c_1$ and $c_2$ since by the Chinese Remainder Theorem, we have $\gcd(-b_2+Wr,Wq)$ depends only on $\gcd(-b_2+Wr,q)$.

        Now we assume that $\gcd(q,W)=1$. Thus, we have that every prime divisor of $q$ is greater than $w$.

        Dividing the sum according to the gcd, we have that
\[
\sigma_{a,q}(h)=\sum_{k|q}\sum_{\substack{r\\\gcd(-b_2+Wr,q)=k}} e(ar/q)\prod_{p|(q/k)}\frac{1-p^{-\alpha}}{1-p^{-1}}.
\]
Let $-b_2+Wr=t$, then in modulo $q$, we have that $r\equiv(t+b_2)\overline{W}(\mod q)$ where $\overline{W}$ is the inverse of $W$ modulo $q$. We can rewrite the sum as
\[
\sum_{k|q}\sum_{\substack{t\\\gcd(t,k)=k}} e\left(\frac{a\overline{W}(t+b_2)}{q}\right) \prod_{p|(q/k)}\frac{1-p^{-\alpha}}{1-p^{-1}}.
\]
We can change the order of the product and the summation to get
\[
\sum_{k|q}\prod_{p|(q/k)}\frac{1-p^{-\alpha}}{1-p^{-1}}e\left( \frac{ab_2\overline{W}}{q} \right)\sum_{\substack{t\\\gcd(t,q)=k}}e(at\overline{W}/q).
\]
The inner sum is Ramanujan's sum, using the fact $\gcd(a,q)=1$, it is $\mu(q/k)$, where $\mu$ is the M\"{o}bius function. Thus, we get
\[
\sigma_{a,q}(h)=\sum_{k|q}\prod_{p|(q/k)}\frac{1-p^{-\alpha}}{1-p^{-1}}e\left( \frac{ab_2\overline{W}}{q} \right)\mu(q/k).
\]
One can simplify this to
\[
e\left( \frac{ab_2\overline{W}}{q} \right)\prod_{p|q}\frac{p^{-\alpha}-p^{-1}}{1-p^{-1}}
\]
by using the multiplicative property of the inner function.
    \end{proof}
    Now using Lemma \ref{lem:major}, we have that if $a'/N$ is in the major arc, we have
    \[
    \widehat{h}(a'/N_{\mathfrak{b}}')=q^{-1}\left(\prod_{p|q}\frac{p^{-\alpha}-p^{-1}}{1-p^{-1}}\right)\tau(a'/N_{\mathfrak{b}}'-a/q)+o(1)
    \]
    if it is nonzero.
    
    Since $q>w$, we get that
    \[
    \widehat{h}(a'/N_{\mathfrak{b}}')=o(1)
    \]
    and we get what we want.
\end{proof}

\begin{proof}[Proof of Proposition \ref{prop:p-th power}]
    We have that
    \begin{align*}
        &\sum_{a\in \Z/N_{\mathfrak{b}}'\Z}\left|\frac{1}{N_{\mathfrak{b}}'}\sum_{n\in \Z/N_{\mathfrak{b}}'\Z} f_{\mathfrak{b}}(n)e(an/N_{\mathfrak{b}}') \right|^p\\
        &\ll \frac{C_W^p(WN_{\mathfrak{b}}-b_2)^{p(1-\alpha)}}{(\alpha N_{\mathfrak{b}})^p}\sum_{a\in \Z/N_{\mathfrak{b}}'\Z}\left| \sum_{n\in \Z/N_{\mathfrak{b}}'\Z} \mathbbm{1}_{A_{\mathfrak{b}}}(n)e(an/N_{\mathfrak{b}}') \right|^p\\
        &\ll \frac{C_W^p(WN_{\mathfrak{b}}-b_2)^{p(1-\alpha)}N_{\mathfrak{b}}}{(\alpha N_{\mathfrak{b}})^p}\int_0^1\left| \sum_{n\in A_{\mathfrak{b}}} e(\theta n) \right|^p\,d\theta.
    \end{align*}
    Now by using Theorem \ref{thm:harp}, we get that
    \begin{align*}
        \sum_{a\in \Z/N_{\mathfrak{b}}'\Z}\left|\frac{1}{N_{\mathfrak{b}}'}\sum_{n\in \Z/N_{\mathfrak{b}}'\Z} f_{\mathfrak{b}}(n)e(an/N_{\mathfrak{b}}') \right|^p&\ll \frac{C_W^p(WN_{\mathfrak{b}}-b_2)^{p(1-\alpha)}N_{\mathfrak{b}}}{(\alpha N_{\mathfrak{b}})^p} \frac{N_{\mathfrak{b}}^{\alpha p+o(1)}}{N_{\mathfrak{b}}}\\
        &\ll W^{p(1-\alpha)}.
    \end{align*}
\end{proof}
\section*{Acknowledgement}
The author would like to thank Fernando Xuancheng Shao for his advice and suggestions, also to Ali Alsetri and Muhammad Afifurrahman for helpful discussions. The author also thanks the anonymous referee for the suggestions and corrections.

\end{document}